\documentclass[12pt]{amsart}

\usepackage{amsmath,amssymb,amsthm,commath}
\usepackage{enumitem,graphicx}
\title{The Commuting Graph of a Solvable $A$-Group}
\author{Rachel Carleton}
\address{Department of Mathematical Sciences, Kent State University, Kent, OH 44242}
\email{rcarlet3@kent.edu}
\author{Mark L. Lewis}
\address{Department of Mathematical Sciences, Kent State University, Kent, OH 44242}
\email{lewis@math.kent.edu}
\subjclass{Primary: 20D20, Secondary: 05C25}
\keywords{Commuting Graph, A-Groups, diameter}

\theoremstyle{plain}
\newtheorem{theorem}{Theorem}[section]
\newtheorem{lemma}[theorem]{Lemma}
\newtheorem{corollary}[theorem]{Corollary}

\newtheorem{hypothesis}[theorem]{Hypothesis}

\newcommand{\centerof}[1]{\mathbf{Z}(#1)}
\newcommand{\centralizer}[2]{\mathbf{C}_{#1}(#2)}
\newcommand{\normalizer}[2]{\mathbf{N}_{#1}(#2)}

\newcommand{\syl}[2]{\textnormal{Syl}_{#1}(#2)}

\begin{document}
	\maketitle
	\begin{abstract}
		Let $G$ be a finite group.  Recall that an $A$-group is a group whose Sylow subgroups are all abelian.  In this paper, we investigate the upper bound on the diameter of the commuting graph of a solvable $A$-group.  Assuming that the commuting graph is connected, we show when the derived length of $G$ is 2, the diameter of the commuting graph will be at most 4.  In the general case, we show that the diameter of the commuting graph will be at most 6.  In both cases, examples are provided to show that the upper bound of the commuting graph cannot be improved.
	\end{abstract}
	
	\section{Introduction}
	In this paper, all groups are finite.  Given a group $G$, we define the \textit{commuting graph} of $G$, denoted $\Gamma(G)$, to be the graph whose vertex set is the noncentral elements of $G$, and two distinct vertices are adjacent in $\Gamma(G)$ if and only if they commute in $G$.  A \textit{path} in a graph is an ordered list of vertices $x_1, x_2, \cdots, x_n$ where there is an edge between $x_i$ and $x_{i+1}$ for all $i$.  If any two vertices can be connected by a path, then the graph is said to be \textit{connected}.  The \textit{distance} between two vertices is the length of the shortest path between two vertices if such a path exists.  When the commuting graph is connected, we define the \textit{diameter} of the graph to be the maximum value of the set of distances between the vertices.  
	
	The commuting graphs of various families of groups have been studied by several authors in the past.  A common question that arises is when will the commuting graph of a group be connected, and if connected, what is the upper bound on the diameter.  In 2002, Segev and Seitz showed that the commuting graph of a classical simple group defined over a field of order greater than 5 is either disconnected or has diameter between 4 and 10 \cite{SS}.  In \cite{IranJafar}, Iranmanesh and Jafarzedah investigated the commuting graph of the symmetric and alternating groups, and determined that the commuting graph of such groups is either disconnected or has diameter at most 5.  In that same paper, Iranmanesh and Jafarzedah conjectured that the diameter of the commuting graph of any finite, non-abelian group could be bounded by a universal constant.  However, work by Giudici and Parker in \cite{GiuPark} provided a family of 2-groups that serve as a counterexample to that conjecture.  While the conjecture that there is a universal bound on the diameter of the commuting graph of any finite, non-abelian group turned out to be false, the conjecture does hold for groups with a trivial center.  In \cite{parkerSoluble}, Parker proved that a solvable group with a trivial center will be either disconnected or have diameter at most 8.  In that paper, he also provided an example of a family of groups that have a connected commuting graph with diameter exactly 8, showing that this bound is sharp.  Expanding on this work in \cite{parkerGen}, Parker and Morgan dropped the solvability hypothesis to show that for any group with a trivial center, the commuting graph will either be disconnected or have diameter at most 10.
	
	In \cite{REU}, the authors looked to expand the results on the commuting graph of a group with a trivial center to groups where $G' \cap \centerof{G}=1$.  They show that in this case, $\Gamma(G)$ will be connected if and only if $\Gamma(G/\centerof{G})$ is connected.  Furthermore, the authors prove that when connected, the two commuting graphs will have the same diameter.  Note that if $G' \cap \centerof{G}=1$, then $\centerof{G/\centerof{G}}$ is trivial.  Thus, the results of \cite{parkerGen} apply to $G/\centerof{G}$, and the authors of \cite{REU} conclude that $\Gamma(G/\centerof{G})$ will either be disconnected or have diameter at most 10.  If $G$ is solvable, the authors conclude that $\Gamma(G)$ will be disconnected or have diameter at most 8.
	
	One class of groups that satisfies the condition $G' \cap \centerof{G}=1$ is \textit{A-groups}, groups whose Sylow subgroups are all abelian.  Solvable $A$-groups where first discussed by Hall in some remarks at the end of \cite{Hall2} and written up formally by Taunt in \cite{Taunt}.  Since $A$-groups satisfy the property that $G' \cap \centerof{G}=1$, the results of \cite{REU} apply to $A$-groups.  In particular, the commuting graph of a solvable $A$-group will be disconnected or have diameter at most 8; however, the authors conjectured that the actual bound on the diameter of the commuting graph of a solvable $A$-group would be lower than 8.  In this paper, we will prove that the upper bound on the diameter of the commuting graph of an $A$-group is at most 6.  Since \cite{REU} establishes when the commuting graph of a solvable $A$-group will be disconnected, we will focus on the case when $G$ is a solvable $A$-group with a connected commuting graph.
	\begin{theorem}\label{main}
		Let $G$ be a solvable $A$-group such that $G/\centerof{G}$ is neither a Frobenius nor 2-Frobenius group.  Then, the diameter of the commuting graph of $G$ is at most 6. 
	\end{theorem}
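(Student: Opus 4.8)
The plan is to reduce to the quotient $\bar G = G/\centerof{G}$ and exploit the structure theory of solvable $A$-groups. By the results of \cite{REU}, since $G' \cap \centerof{G} = 1$ for an $A$-group, $\Gamma(G)$ is connected if and only if $\Gamma(\bar G)$ is connected, and in that case the two graphs have the same diameter; moreover $\centerof{\bar G} = 1$. So it suffices to bound $\mathrm{diam}(\Gamma(\bar G))$ by $6$ under the hypothesis that $\bar G$ is centerless, solvable, an $A$-group (this property is inherited by quotients of $A$-groups by a theorem of Taunt in \cite{Taunt}), and neither Frobenius nor $2$-Frobenius. For such a centerless solvable group write $F = \fit{\bar G}$; then $\centralizer{\bar G}{F} = F$ and $F$ is a direct product of elementary-abelian (indeed abelian, since it is an $A$-group) Sylow subgroups. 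The key point is that $F$ is abelian and self-centralizing, so for any $x \in \bar G$, $\centralizer{\bar G}{x} \cap F \ne 1$ is the mechanism by which we route paths: I want to show that almost every noncentral element commutes with some nonidentity element of $F$, so that $F \setminus \{1\}$ serves as a "hub" of small diameter, and then patch in the exceptional vertices.

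The main steps, in order. \textbf{(1)} Show that $\Gamma(F \setminus \{1\})$ — the subgraph induced on the nonidentity elements of $F$ — is connected with diameter at most $2$: since $F$ is a direct product of abelian Sylow subgroups $F = P_1 \times \cdots \times P_k$, any two elements of $F$ of the same prime order, or sharing a common prime in their support, are joined by a path of length $\le 2$ through a generator of one $P_i$; and if $k \ge 2$ an element supported on $P_i$ and one supported on $P_j$ commute with a common element of $P_i \times P_j$. (If $k = 1$ then $F$ is an abelian Sylow subgroup and the argument is even easier, all of $F\setminus\{1\}$ being a clique.) \textbf{(2)} Classify the noncentral $x$ with $\centralizer{\bar G}{x} \cap F = 1$: such $x$ act fixed-point-freely on $F$, so $\langle x \rangle F$ is a Frobenius group with kernel $F$; the hypothesis that $\bar G$ is not Frobenius or $2$-Frobenius, together with the $A$-group structure (a Frobenius complement in a solvable $A$-group is cyclic or metacyclic of a restricted shape), should force these "bad" elements to lie in a controlled set — I expect they lie inside complements that themselves contain plenty of elements meeting $F$ nontrivially via Hall subgroup arguments. \textbf{(3)} For a typical noncentral $x$ with $\centralizer{\bar G}{x} \cap F \ne 1$, pick $1 \ne f_x \in \centralizer{\bar G}{x} \cap F$; then for any two such $x, y$ we get a path $x, f_x, \ldots, f_y, y$ of length $\le 2 + 2 = 4$ using Step (1). \textbf{(4)} Handle a bad vertex $x$ (one with $\centralizer{\bar G}{x}\cap F = 1$): show $\centralizer{\bar G}{x}$ contains, or is connected within distance $\le 2$ in $\Gamma(\bar G)$ to, an element $z$ with $\centralizer{\bar G}{z} \cap F \ne 1$ — for instance a nonidentity power of $x$ if $x$ has composite order, or an element of a different Hall subgroup of the complement $\centralizer{\bar G}{x}$, using that the complement is a solvable $A$-group of smaller order whose own Fitting structure meets $F$. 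Adding the at most two extra steps at each end of a bad vertex upgrades the bound from $4$ to $6$.

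\textbf{Main obstacle.} The crux is Step (2)/(4): pinning down exactly which noncentral elements fail to centralize any nontrivial element of $F$, and showing that each such element is nonetheless within distance $2$ of the $F$-hub without blowing the diameter past $6$. This is where the hypothesis "not Frobenius and not $2$-Frobenius" must be used in an essential way — it is precisely the configuration $\bar G = F \rtimes H$ with $H$ acting Frobeniusly, or the nested $2$-Frobenius configuration, that would produce a whole complement's worth of bad vertices with no short route to $F$, and that is the case the theorem excludes. I would expect to invoke the detailed structure of solvable $A$-groups (the Hall system / Sylow tower behavior from \cite{Taunt}, \cite{Hall2}) to argue that outside the excluded configurations, any Frobenius-acting element sits in a subgroup possessing a nontrivial abelian normal piece that $x$ centralizes, giving the needed length-$2$ detour; verifying this uniformly, including small-order and prime-order edge cases, is the technical heart of the argument and the step I expect to consume most of the work.
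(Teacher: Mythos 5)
Your proposal is an outline rather than a proof: the steps you yourself identify as ``the technical heart of the argument'' --- pinning down which elements have $\centralizer{\bar G}{x}\cap F=1$ and routing them back to $F$ --- are exactly the steps that are deferred with ``should force'' and ``I would expect to invoke,'' and they are where essentially all of the paper's work lies. The paper needs: a delicate lemma about elements whose $p$-part lies neither in $G'$ nor in any absolute system normalizer (Lemma \ref{need}, proved by a conjugacy-counting argument inside a Hall $\{p,q_i\}$-subgroup); a separate treatment of the derived length $2$ case giving diameter $\le 4$; Parker's structural lemmas (under the contradiction hypothesis that the diameter is $\ge 7$, the Fitting subgroup is the centralizer of a minimal normal subgroup and is a Hall subgroup of $J$, the preimage of $\fit{G/F}$); an appeal to Aschbacher's fixed-point theorem (Lemma \ref{Asch}) to show that prime-order elements acting fixed-point-freely on $V$ lie in $J$; and a case division according to whether $J$ is Frobenius, which in the Frobenius case forces derived length exactly $3$ and the decomposition $G=M_0M_1G''$. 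None of this is present or replaceable by the general remarks in your steps (2) and (4).

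There is also a quantitative problem with your accounting that shows the plan cannot close as stated. Since $F$ is abelian here (Lemma \ref{FitGroup} with $\centerof{G}=1$), $F^{\#}$ is a clique, so if every vertex were within distance $2$ of $F^{\#}$ as your step (4) aims for, the diameter would be at most $2+1+2=5$; but SmallGroup$(1500,115)$ has diameter exactly $6$, so some vertices are genuinely at distance $3$ from every element of $F^{\#}$ (its $2$-elements commute with nothing in the Fitting subgroup). Conversely, if a bad vertex is only within distance $2$ of a \emph{good} vertex (distance $3$ from $F^{\#}$), then two bad vertices could be at distance $3+1+3=7$. The paper avoids this by proving the stronger, correctly calibrated statement that every element is within distance $3$ of one \emph{fixed, carefully chosen} $c^*\in F^{\#}$ (chosen as a central element of $\langle x^*\rangle F$ for a suitable $x^*$ in a system normalizer with nontrivial fixed points on $F$), which yields $\le 3+3=6$. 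Your hub-and-spoke bookkeeping, even granting the unproved claims, lands on $7$ or $8$.
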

	
	In Section 2, we provide some preliminary results, particularly on various properties of solvable $A$-groups, that will be needed to prove the bound on the diameter of the commuting graph. In Section 3, we focus on the case where the derived length of $G$ is 2.  We prove that in this case the diameter of the commuting graph will be at most 4.  In Section 4, we prove that the commuting graph of any solvable $A$-group has diameter at most 6.  Lastly, we provide an example of a solvable $A$-group with a connected commuting graph with diameter 6, showing that the bound of 6 cannot be further improved.

	\section{Preliminaries}
	
	Before we can prove Theorem 1.1, we first provide results about $A$-groups, given by Taunt in \cite{Taunt}.  The first result regards the intersection of the center and derived subgroup of $A$-groups, and applies to all $A$-groups, not just solvable ones.
	
	\begin{lemma}[\cite{Taunt} Theorem 4.1]\label{ACenter}
		Let $G$ be an $A$-group.  Then $G' \cap \centerof{G} = 1$.
	\end{lemma}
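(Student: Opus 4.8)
The plan is to reduce the statement to a single prime and then apply the transfer homomorphism. Write $Z = G' \cap \centerof{G}$. Since $Z \le \centerof{G}$ it is abelian, so $Z = 1$ will follow once we show $Z$ contains no element of prime order. Suppose, for contradiction, that there is $1 \neq z \in Z$, and (replacing $z$ by a suitable power) assume $|z| = p$ for some prime $p$. Then $\langle z \rangle$ is central, hence a normal $p$-subgroup of $G$, so $\langle z \rangle \le \mathbf{O}_p(G)$, which is contained in every Sylow $p$-subgroup; fix a Sylow $p$-subgroup $P$ with $z \in P$. By the $A$-group hypothesis, $P$ is abelian.

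Next I would invoke the transfer homomorphism $V = V_{G \to P} \colon G \to P/P' = P$, the quotient being trivial because $P$ is abelian. Since the target is abelian, $G' \le \ker V$, so in particular $V(z) = 1$. On the other hand, I would evaluate $V$ directly on the central element $z$: choosing a right transversal $t_1, \dots, t_n$ of $P$ in $G$ with $n = [G:P]$, the coset $P t_i z$ equals $P z t_i = P t_i$ because $z$ is central and lies in $P$, so each transversal representative is unchanged and $V(z) = \prod_{i=1}^{n} t_i z t_i^{-1} = z^{n} = z^{[G:P]}$. Since $p \nmid [G:P]$ and $|z| = p$, we get $V(z) = z^{[G:P]} \neq 1$, contradicting $V(z) = 1$. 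Hence $Z$ has no element of prime order, and $Z = 1$.

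The points requiring care are the standard evaluation of the transfer on a central element and the preliminary reductions (that $Z$ is abelian, hence trivial iff it has no element of prime order, and that a central element of order $p$ lies in a Sylow $p$-subgroup); none of these should present real difficulty, so I would not expect a genuine obstacle here. The only place the $A$-group hypothesis is used is in asserting $P/P' = P$, which is precisely what forces $G' \le \ker V$; without abelian Sylow subgroups the argument has no content. As an alternative I could instead combine the focal subgroup theorem with Burnside's theorem on fusion in abelian Sylow subgroups to obtain $P \cap G' = [P, \normalizer{G}{P}]$ together with the coprime-action decomposition $P = (P \cap G') \times (P \cap \centerof{\normalizer{G}{P}})$, and then note that $z \in P \cap G' \cap \centerof{G}$ would lie in both direct factors; I regard the transfer computation as the shorter route and would pursue it first.
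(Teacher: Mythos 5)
Your proof is correct. The paper does not prove this lemma at all --- it is quoted verbatim from Taunt's paper (Theorem 4.1 of \cite{Taunt}) --- so there is no internal argument to compare against; but your transfer computation is the standard, self-contained proof of this fact. All the steps check out: a central element of order $p$ lies in every Sylow $p$-subgroup, the abelianness of $P$ makes $G'\le\ker V$, the evaluation $V(z)=z^{[G:P]}$ on a central element of $P$ is correct (each coset $Pt_i$ is fixed by right multiplication by $z$, and $t_izt_i^{-1}=z$), and $p\nmid[G:P]$ gives the contradiction. The focal-subgroup/Burnside alternative you sketch would also work.
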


	With solvable $A$-groups, it can be useful to look at the terms of the derived series and the relative system normalizers of those terms.  A \textit{Sylow system} of a group $G$ is a set of Sylow subgroups, one for each prime dividing the order of $G$, such that each pair of Sylow subgroups permute.  Sylow systems were studied by Hall in \cite{HallSys}.  Given a Sylow system $\mathcal{S}$ of $G$, we can define its \textit{system normalizer} $M(G) = \{g \in G \mid gPg^{-1} = P, P \in \mathcal{S}\}$ to be the set of all elements in $G$ that normalize every Sylow subgroup in $\mathcal{S}$.  System normalizers were studied by Hall in \cite{Hall}.  If $G$ is part of a larger group $L$, then, given a Sylow system $\mathcal{S}$, of $G$, we can define the following group:
	\[M_L(G) = \{ \ell \in L \mid \ell P \ell^{-1} = P, P \in \mathcal{S} \} \]
	We call $M_L(G)$ a system normalizer of $G$ relative to $L$ or a relative system normalizer of $G$.  When $L=G$, we will call $M_L(G)$ an absolute system normalizer.
	
	In his paper on the construction of solvable groups \cite{Hall2}, Hall looked at constructing partially complemented extensions of a characterstic subgroup by a partial complement belonging to a characteristic class of conjugate subgroups.  In solvable subgroups, the relative system normalizers form such a characteristic class.  More specifically, Hall showed that if $H/N$ is any nilpotent factor of the (not necessarily solvable) group $G$, then $G = M_G(H)N$, where $M_G(H)$ is a system normalizer of $H$ relative to $G$.  Furthermore, when $G$ is solvable, Hall showed that one could choose $H$ and $N$ from adjacent terms in the lower nilpotent series.  The \textit{lower nilpotent series} is the series $G = L_0 \geq L_1 \geq \cdots \geq L_n \geq \cdots$, where $L_i$ is the smallest normal subgroup of $L_{i-1}$ such that $L_{i-1}/L_i$ is nilpotent. This series is also known as the lower Fitting chain or lower Fitting series. Note that in $A$-groups, nilpotent subgroups are necessarily abelian and so the lower nilpotent series coincides with the derived series, so we can take $H=G^{(i-1)}$ and $N=G^{(i)}$.  What makes Hall's construction theory especially useful for $A$-groups is that the relative system normalizers of the terms of the derived series are complements of the subsequent term in the derived series.
	
	\begin{lemma}[\cite{Taunt} Corollary 4.5]\label{ML}
		Let $G = G^{(0)} \trianglerighteq G' = G^{(1)} \trianglerighteq \cdots \trianglerighteq G^{(n)}=1$ be the Derived Series of $G$.  Let $M_G(G^{(i)})$ be a relative system normalizer for $G^{(i)}$ in $G$.  Then $G = M_G(G^{(i-1)})G^{(i)}$, and $M_G(G^{(i-1)})$ is a complement for $G^{(i)}$ in $G$.  In particular, if $M$ is any absolute system normalizer of $G$, then $M$ is a complement of $G'$ in $G$.
	\end{lemma}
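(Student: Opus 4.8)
The plan is to derive both statements from Hall's theory of Sylow systems and system normalizers together with Lemma~\ref{ACenter}, reducing the general case term by term to a statement about an \emph{absolute} system normalizer of a section of $G$.

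First I would record the standard closure properties: every subgroup and every quotient of an $A$-group is again an $A$-group, since a Sylow subgroup of a subgroup embeds into a Sylow subgroup of the ambient group, and a Sylow $p$-subgroup of a quotient by a normal subgroup is the image of a Sylow $p$-subgroup. In particular each $G^{(i-1)}$ is a solvable $A$-group. The factor $G^{(i-1)}/G^{(i)}$ is abelian, hence nilpotent, so Hall's construction theorem in the form quoted before the statement --- $G = M_G(H)N$ whenever $H/N$ is a nilpotent factor --- applied with $H = G^{(i-1)}$ and $N = G^{(i)}$ gives $G = M_G(G^{(i-1)})G^{(i)}$. This is the first assertion, and it reduces the complement claim to proving that $M_G(G^{(i-1)}) \cap G^{(i)} = 1$.

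For that, fix the Sylow system $\mathcal{S}$ of $G^{(i-1)}$ relative to which $M := M_G(G^{(i-1)})$ is defined. Then $M \cap G^{(i-1)}$ consists exactly of the elements of $G^{(i-1)}$ normalizing every member of $\mathcal{S}$, i.e. it is an absolute system normalizer $K$ of $G^{(i-1)}$; and since $G^{(i)} = (G^{(i-1)})' \le G^{(i-1)}$, we get $M \cap G^{(i)} = K \cap (G^{(i-1)})'$. So everything reduces to the following: if $H$ is a solvable $A$-group and $K$ is an absolute system normalizer of $H$, then $K \cap H' = 1$. (Applying this with $H = G$ and $i = 1$ and combining with $G = KG'$ also yields the ``in particular'' sentence, since every absolute system normalizer of $G$ arises this way for a suitable Sylow system.) To prove it, choose a chief series $1 = H_0 < H_1 < \cdots < H_k = H'$ of $H$ passing through $H'$. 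For each $j$ with $1 \le j \le k$ we have $H_{j-1} \le H'$, so in the $A$-group $\overline{H} = H/H_{j-1}$ one has $\overline{H}' = H'/H_{j-1}$; were the chief factor $H_j/H_{j-1}$ centralized by $H$, it would lie in $\overline{H}' \cap \centerof{\overline{H}}$, which is trivial by Lemma~\ref{ACenter}, forcing $H_{j-1} = H_j$. Hence each $H_j/H_{j-1}$ is an eccentric chief factor, and by Hall's theorem that a system normalizer of a solvable group avoids every eccentric chief factor, $K \cap H_j = K \cap H_{j-1}$ for all such $j$; telescoping from $j = k$ down to $j = 1$ gives $K \cap H' = K \cap H_0 = 1$.

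The only nonroutine point is the reduction in the previous paragraph: once one sees that $M_G(G^{(i-1)}) \cap G^{(i)}$ is just ``(absolute system normalizer) $\cap$ (derived subgroup)'' computed inside $G^{(i-1)}$, Lemma~\ref{ACenter} does the real work by forcing the relevant chief factors to be eccentric, and Hall's avoidance property finishes it. One could equally run an induction on $|H|$, passing to the quotient by a minimal normal subgroup contained in $H'$ --- automatically eccentric by Lemma~\ref{ACenter} --- so I expect no serious obstacle beyond being careful with the bookkeeping of Sylow systems under passage to subgroups and quotients.
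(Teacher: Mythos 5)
The paper offers no proof of this lemma: it is quoted directly from Taunt (\cite{Taunt}, Corollary 4.5), so there is no in-paper argument to compare against. Your reconstruction is correct and, as far as I can tell, follows the same line as Taunt's original derivation. The factorization $G = M_G(G^{(i-1)})G^{(i)}$ is exactly Hall's construction theorem applied to the abelian (hence nilpotent) factor $G^{(i-1)}/G^{(i)}$, which the paper itself quotes in the paragraph preceding the lemma. Your reduction of the intersection claim is the right observation: $M_G(G^{(i-1)}) \cap G^{(i-1)}$ consists precisely of the elements of $G^{(i-1)}$ normalizing every member of the chosen Sylow system, i.e.\ it is an absolute system normalizer $K$ of $G^{(i-1)}$, so $M_G(G^{(i-1)}) \cap G^{(i)} = K \cap (G^{(i-1)})'$. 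The final step is also sound: quotients of $A$-groups are $A$-groups, so Lemma~\ref{ACenter} applied to $H/H_{j-1}$ forces every chief factor below $H'$ to be eccentric, and Hall's cover--avoidance theorem for system normalizers of soluble groups then gives $K \cap H' = K \cap H_0 = 1$ by telescoping. The only external inputs are Hall's two theorems (construction and cover--avoidance), both of which are cited in the paper's bibliography, so the argument is complete; note that the $A$-group hypothesis enters only through Lemma~\ref{ACenter}, which is exactly where it must, since the statement fails for general solvable groups.
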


	When bounding the diameter of the commuting graph of a solvable $A$-group, we will search for paths through the Fitting subgroup.  In $A$-groups, we have that the Fitting subgroup is the direct product of the centers of the terms of the derived series.
	
	\begin{lemma}[\cite{Taunt} Theorem 5.4]\label{FitGroup}
		Let $G$ be a solvable $A$-group with derived length $n$.  Then \[F(G) = Z(G) \times Z(G') \times \cdots \times Z(G^{(n-1)}).\]
	\end{lemma}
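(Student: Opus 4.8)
The plan is to argue by induction on the derived length $n$ of $G$. If $n=1$ then $G$ is abelian and both sides of the asserted equality equal $G$, so we may assume $n\ge 2$. The whole inductive step is reduced to the single structural identity
\[
F(G) = Z(G)\times F(G').
\]
Granting this, we finish as follows: a subgroup of an $A$-group is again an $A$-group (its Sylow subgroups embed in the abelian Sylow subgroups of the ambient group), so $G'$ is a solvable $A$-group of derived length $n-1$ with $(G')^{(i)}=G^{(i+1)}$; by induction $F(G')=Z(G')\times Z(G'')\times\cdots\times Z(G^{(n-1)})$, and substituting yields the lemma. To see that the displayed product is genuinely direct, note $Z(G)$ is central, hence commutes with $F(G')$, and $Z(G)\cap F(G')\le Z(G)\cap G'=1$ by Lemma~\ref{ACenter}; more generally every $Z(G^{(i)})$ is characteristic in $G^{(i)}\trianglelefteq G$, hence normal in $G$, and for $i<j$ the factor $Z(G^{(j)})$ lies in $G^{(j)}\le G^{(i)}$, which $Z(G^{(i)})$ centralizes, so the iterated product is the internal direct product claimed.

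Two of the three inclusions comprising $F(G)=Z(G)\times F(G')$ are routine. Since $Z(G)$ is central (hence a nilpotent normal subgroup) and $F(G')$ is characteristic in $G'\trianglelefteq G$ --- so normal in $G$ --- abelian, and centralized by $Z(G)$, the product $Z(G)F(G')$ is an abelian normal subgroup of $G$, hence contained in $F(G)$. Along the way I would also record the small fact $F(G)\cap G'=F(G')$: one containment holds because $F(G')$ is normal in $G$ and nilpotent, so $F(G')\le F(G)$, while $F(G')\le G'$; the reverse because $F(G)\cap G'$ is a normal nilpotent subgroup of $G'$.

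The content is the remaining inclusion $F(G)\le Z(G)F(G')$, and this is where the $A$-group hypothesis is essential. Write $F(G)=\prod_p O_p(G)$ over the primes dividing $|G|$; since $Z(G)F(G')$ is a subgroup, it suffices to prove $O_p(G)\le Z(G)F(G')$ for each $p$. Fix $p$, set $V=O_p(G)$, and let $P$ be a Sylow $p$-subgroup of $G$. Then $V\le P$, and $P$ is abelian because $G$ is an $A$-group, so $P$ centralizes $V$; hence $C_G(V)$ contains a Sylow $p$-subgroup of $G$, i.e.\ $p\nmid |G:C_G(V)|$. Thus $G$ acts on the $p$-group $V$ through the $p'$-group $G/C_G(V)$, and since this action is coprime and $V$ is abelian, the standard decomposition gives $V=C_V(G)\times[V,G]$. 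Here $C_V(G)=V\cap Z(G)\le Z(G)$, whereas $[V,G]\le V$ (as $V\trianglelefteq G$) and $[V,G]\le G'$, so $[V,G]\le V\cap G'\le F(G)\cap G'=F(G')$ by the preceding paragraph. Hence $V\le Z(G)F(G')$, and taking the product over all $p$ finishes the proof. I expect this step --- recognizing that abelian Sylow subgroups force the conjugation action of $G$ on each $O_p(G)$ to be coprime, so that $V=C_V(G)\times[V,G]$ with $[V,G]\le G'$ --- to be the crux; the identity $F(G)=Z(G)\times F(G')$ genuinely fails for general solvable groups (already for nonabelian nilpotent groups), so the hypothesis must enter precisely here.
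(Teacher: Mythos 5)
The paper does not actually prove this lemma; it is quoted verbatim from Taunt (Theorem 5.4 of \cite{Taunt}), so there is no in-paper argument to compare against line by line. Your proof is correct and self-contained. The reduction to $F(G)=Z(G)\times F(G')$ plus induction on derived length is sound: the inclusion $Z(G)F(G')\le F(G)$ and the identity $F(G)\cap G'=F(G')$ are routine as you say; directness follows from Lemma~\ref{ACenter} applied at each stage (to the $A$-group $G^{(i)}$, giving $Z(G^{(i)})\cap G^{(i+1)}=1$); and the crux is exactly where you locate it. Since $O_p(G)$ lies in every Sylow $p$-subgroup and these are abelian, $G/\centralizer{G}{O_p(G)}$ is a $p'$-group, so the coprime-action decomposition of the abelian group $O_p(G)$ gives $O_p(G)=\centralizer{O_p(G)}{G}\times[O_p(G),G]\le \centerof{G}\,F(G')$, and multiplying over $p$ yields $F(G)\le Z(G)F(G')$. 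This is a clean modern route; Taunt's original treatment instead proceeds through his machinery of system normalizers and the factorizations $G^{(i-1)}=M\,G^{(i)}$ (the apparatus quoted in Lemma~\ref{ML}), so your argument has the advantage of requiring only the standard coprime-action lemma rather than that structure theory. No gaps.
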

	To prove the bound of 6 on the diameter of the commuting graph of an $A$-group, we will also need the following result on Frobenius groups.  Note that the next lemma applies to more than just $A$-groups.
	\begin{lemma}[\cite{Isaacs} Theorem 6.4]\label{Frob1}
		Let $N$ be a normal subgroup of a finite group $G$, and suppose that $A$ is a complement for $N$ in $G$.  The following are equivalent:
		\begin{enumerate}
			\item $G$ is a Frobenius group with kernel $N$.
			\item $\centralizer{G}{a} \leq A$ for all nontrivial $a \in A$.
			\item $\centralizer{G}{n} \leq N$ for all nontrivial $n \in N$.
		\end{enumerate}
	\end{lemma}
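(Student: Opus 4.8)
The plan is to introduce the auxiliary condition
\[
(\ast)\qquad A \cap A^{g} = \{1\} \ \text{ for all } g \in G \setminus A ,
\]
and to prove the three equivalences $(1)\Leftrightarrow(\ast)$, $(2)\Leftrightarrow(\ast)$, and $(3)\Leftrightarrow(\ast)$; the pairwise equivalences among $(1)$, $(2)$, $(3)$ then follow formally. I would take for granted the standing assumption $1 < N < G$, without which $(1)$ cannot hold. The constant companion of the argument is the elementary remark that, since $N \trianglelefteq G$, \emph{every} conjugate $A^{g}$ is again a complement for $N$ in $G$: indeed $A^{g} \cap N = (A \cap N)^{g} = \{1\}$ and $A^{g} N = (AN)^{g} = G$.

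I would dispatch $(2)\Leftrightarrow(\ast)$ and $(3)\Leftrightarrow(\ast)$ in one stroke. The implication $(\ast)\Rightarrow(2)$ is immediate: if $g$ centralizes a nontrivial $a \in A$ then $a \in A \cap A^{g}$, so $g \in A$. The implication $(\ast)\Rightarrow(3)$ is deferred, as it will use the Frobenius partition obtained below. For the two converses a single computation serves. Suppose $1 \neq a \in A \cap A^{g}$, so that $b := g a g^{-1}$ is a nontrivial element of $A$; writing $g = n_{0} c$ with $n_{0} \in N$ and $c \in A$, one computes $b = n_{0} a' n_{0}^{-1}$ where $a' := c a c^{-1} \in A$ is again nontrivial, whence $b (a')^{-1} \in N \cap A = \{1\}$, so $b = a'$ and $n_{0}$ centralizes the nontrivial element $a' \in A$. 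Under hypothesis $(2)$ this forces $n_{0} \in \centralizer{G}{a'} \leq A$, hence $n_{0} \in N \cap A = \{1\}$ and $g = c \in A$. Under hypothesis $(3)$ one rereads the same conclusion as saying that the nontrivial kernel element $n_{0}$ is centralized by $a'$; then $n_{0} \neq 1$ would give $a' \in \centralizer{G}{n_{0}} \leq N$, i.e.\ $a' \in N \cap A = \{1\}$, which is absurd, so $n_{0} = 1$ and again $g = c \in A$. In either case $(\ast)$ holds.

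For $(1)\Leftrightarrow(\ast)$: condition $(\ast)$ forces $\normalizer{G}{A} = A$, so $A$ has precisely $[G:A] = |N|$ conjugates; by $(\ast)$ (after conjugating so that any intersection of two conjugates takes the form $A \cap A^{g}$ with $g \notin A$) distinct conjugates meet only in $\{1\}$, and, being complements for $N$, each meets $N$ only in $\{1\}$. A count then gives $\bigl| \bigcup_{g} A^{g} \bigr| = 1 + |N|(|A| - 1) = |G| - |N| + 1 = \bigl| (G \setminus N) \cup \{1\} \bigr|$, and since the left side is contained in the right side the two sets coincide. Thus $G = N \,\sqcup\, \bigsqcup (A^{g} \setminus \{1\})$, which is exactly the statement that $G$ is a Frobenius group with complement $A$, the Frobenius kernel being forced to equal $N$; this is $(1)$. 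Conversely, if $(1)$ holds then $G$ possesses a Frobenius complement whose associated kernel is $N$, and since the kernel and any complement of a Frobenius group have coprime orders, the Schur--Zassenhaus theorem shows that our group-theoretic complement $A$ is conjugate to that Frobenius complement, hence itself satisfies $(\ast)$. Finally, armed with the partition $G = N \,\sqcup\, \bigsqcup (A^{g} \setminus \{1\})$ coming from $(\ast)\Rightarrow(1)$, I can close the deferred implication $(\ast)\Rightarrow(3)$: a nontrivial $n \in N$ cannot be centralized by any $g \notin N$, for such a $g$ lies in some conjugate $A^{h}$, say $g = a^{h}$ with $1 \neq a \in A$, and then $n^{h^{-1}} \in N$ centralizes $a$, so $n^{h^{-1}} \in \centralizer{G}{a} \leq A$ by $(2)$, forcing $n^{h^{-1}} \in N \cap A = \{1\}$ and $n = 1$.

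The main obstacle is one of precision rather than depth, and it sits in the step $(\ast)\Rightarrow(1)$: one must check carefully that $\normalizer{G}{A} = A$, that the $|N|$ conjugates of $A$ are pairwise trivially intersecting, and that the two cardinalities genuinely coincide, and one must be attentive to what it should mean for $G$ to be a Frobenius group with kernel $N$ when only a group-theoretic complement $A$ is supplied — this is the point at which the coprimality of $|N|$ and $[G:N]$ in a Frobenius group, together with the conjugacy of all complements, must be invoked. With the conjugation conventions fixed and the observation that conjugates of $A$ are complements for $N$ always at hand, the remaining implications reduce to the short computations sketched above.
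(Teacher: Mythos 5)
The paper does not prove this lemma at all: it is imported verbatim as Theorem 6.4 of Isaacs' \emph{Finite Group Theory}, so there is no internal argument to compare against. Your proof is correct and self-contained. Routing everything through the condition $(\ast)$ (that $A\cap A^g=1$ for $g\notin A$) is a sound organization, and the individual steps check out: the identity $b(a')^{-1}\in N\cap A$ forcing $b=a'$ is the right lever for both converses, the counting argument for $(\ast)\Rightarrow(1)$ is complete (including the observation that every conjugate of $A$ is again a complement for $N$ and hence meets $N$ trivially), and the deferred implication $(\ast)\Rightarrow(3)$ closes correctly using the partition together with $(2)$. The only place worth flagging is $(1)\Rightarrow(\ast)$: invoking the conjugacy part of Schur--Zassenhaus is legitimate but heavier than necessary (in full generality that conjugacy statement rests on the solvability of $N$ or $G/N$, hence on Feit--Thompson, unless one separately notes that a Frobenius kernel is nilpotent). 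You can avoid it entirely by proving $(1)\Rightarrow(3)$ directly --- if $1\neq n\in N$ were centralized by some $g$ lying in a conjugate of the Frobenius complement $H$, then $n$ would lie in $\centralizer{G}{g}\leq H^x$, contradicting $N\cap H^x=1$ --- and then feeding $(3)$ into your already-established implication $(3)\Rightarrow(\ast)$. This keeps the whole lemma at the elementary level of Isaacs' own treatment.
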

	Lastly, we will need the following result, which delivers elements with nontrivial centralizers.  Again, note this result applies to more than just $A$-groups.
	\begin{lemma}[\cite{Aschb} Theorem 36.2]\label{Asch}
		Let $p,q$, and $r$ be distinct primes, $X$ be a group of order $r$ that acts faithfully on a $q$-group $Q$ and $V$ be a faithful $\text{GF}(p)XQ$-module.  Additionally if $q=2$ and $r$ is a Fermat prime, assume $Q$ is abelian.  Then, $\centralizer{V}{X}\neq 0$.
	\end{lemma}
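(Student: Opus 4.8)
This is a standard fixed‑point theorem for coprime linear actions, and the natural route is a minimal‑counterexample argument: assume $\centralizer{V}{X}=0$ and choose a counterexample with $|Q\rtimes X|\cdot|V|$ as small as possible. The first step is to reduce to the case that $V$ is irreducible as a $\mathrm{GF}(p)[QX]$-module. If $0<W<V$ is a proper nonzero submodule, then $\centralizer{Q}{W}\cap\centralizer{Q}{V/W}$ is a $q$-group acting coprimely on $V$ and trivially on both factors of $0<W<V$, hence trivially on $V$, hence trivial since $V$ is faithful; thus $Q$ embeds into $(Q/\centralizer{Q}{W})\times(Q/\centralizer{Q}{V/W})$, and because $|X|=r$ is prime, $X$ acts faithfully on one of these two quotients, say $Q/\centralizer{Q}{W}$, which acts faithfully on $W$ and, if $q=2$ and $r$ is Fermat, is abelian because $Q$ is. Minimality applied to this smaller configuration gives $\centralizer{W}{X}\ne0$, a contradiction. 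So $V$ is irreducible; since $Q\ne1$ and $V$ is faithful, also $\centralizer{V}{Q}=0$. Passing to a splitting field (which commutes with taking $X$-fixed points), I would further assume $V$ absolutely irreducible.

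Next I would run Clifford theory for the normal subgroup $Q$ acting on $V$: the group $X$ permutes the $Q$-homogeneous components of $V$, and if it moves them the orbit is regular of length $r$, so in an adapted block decomposition $X$ acts as a single cyclic permutation of $r$ equal blocks with $X^{r}=1$; such a matrix is conjugate to the obvious block cyclic shift and hence has $1$ as an eigenvalue, giving $\centralizer{V}{X}\ne0$ — a contradiction. Hence $X$ fixes the unique homogeneous component, so after twisting by a linear character of $X$ we may take $V$ to be an absolutely irreducible $Q$-module that is $X$-invariant with $X$ acting linearly. Now $\centerof{Q}$ acts on $V$ by scalars, so it is cyclic; since scalars are central in $\mathrm{GL}(V)$, $X$ centralizes $\centerof{Q}$, and as $X$ is faithful on $Q$ it is then faithful on $Q/\centerof{Q}$; moreover $Q$ is nonabelian, since a faithful $X$-invariant character of an abelian $Q$ would force $X$ to centralize $Q$. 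Stripping proper $X$-invariant subgroups from $Q$ and invoking minimality, one reduces to $Q$ extraspecial of order $q^{1+2n}$ with $X$ acting faithfully on $Q/\centerof{Q}\cong\mathrm{GF}(q)^{2n}$ preserving the nondegenerate alternating form given by the commutator map, and $V$ the (essentially unique) faithful irreducible module of dimension $q^{n}$ (the Weil module).

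In this remaining case I would estimate the Brauer character $\beta$ of $V$. Since $X\cong C_{r}$ acts coprimely, $\dim\centralizer{V}{X}=\frac1r\bigl(\dim V+\sum_{1\ne x\in X}\beta(x)\bigr)$, so it suffices that $\sum_{x\ne1}\beta(x)>-q^{n}$. The key input is the Weil‑representation identity $|\beta(x)|^{2}=|\centralizer{Q/\centerof{Q}}{x}|$, together with a sign given by a quadratic Gauss sum when $q$ is odd. As $X$ has prime order, either it is fixed‑point‑free on $Q/\centerof{Q}$, whence every $|\beta(x)|=1$ and $\dim\centralizer{V}{X}\ge\frac1r(q^{n}-(r-1))$, which is positive exactly when $q^{n}\ge r$; or it has nontrivial fixed points on $Q/\centerof{Q}$, in which case the commutator form splits $Q/\centerof{Q}$ into $X$-complementary nondegenerate subspaces, $Q$ into a central product of two extraspecial $X$-invariant subgroups, and $V$ into a tensor product, reducing matters to the fixed‑point‑free case on a smaller factor. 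A short number‑theoretic check — using that $C_{r}$ embeds faithfully into the isometry group of a $2n$-dimensional alternating $\mathrm{GF}(q)$-space — shows $q^{n}\ge r$ always holds when $q$ is odd, and holds when $q=2$ unless $r=2^{n}+1$ is a Fermat prime with $Q\cong 2^{1+2n}_{-}$; in that one configuration indeed $\beta(x)=-1$ for all $x\ne1$ and $\dim\centralizer{V}{X}=0$, but there $Q$ is nonabelian, so it is barred by hypothesis. This yields the contradiction.

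The main obstacle is the second step: carrying out the successive reductions that bring $Q$ down to an extraspecial group while preserving faithfulness of $X$ on $Q$ and on the surviving submodule, keeping the hypothesis ``$Q$ abelian when $q=2$ and $r$ Fermat'' alive through each reduction, and handling the central‑product/tensor‑product analysis when $X$ has fixed points on $Q/\centerof{Q}$ (where, in particular, a possible scalar twist on the tensor factors must be ruled out). The accompanying arithmetic — verifying that no parameter set other than the Fermat one survives the crude character estimate, including the Gauss‑sum sign bookkeeping for odd $q$ — is where the precise shape of the exceptional hypothesis gets pinned down. By contrast, the reduction to irreducibility, the block‑cycle argument, and the final inequality once $q^{n}\ge r$ is known are routine coprime‑action and counting.
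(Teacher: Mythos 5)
This lemma is not proved in the paper at all: it is imported verbatim as Theorem 36.2 of Aschbacher's \emph{Finite Group Theory}, so there is no in-paper argument to compare yours against. Judged on its own, your outline is the standard route to this Hall--Higman-type fixed-point theorem and is essentially the one Aschbacher's book follows: minimal counterexample, reduction to an (absolutely) irreducible faithful module, Clifford theory for the normal $q$-group $Q$ (with the orbit-sum vector handling the case where $X$ permutes the homogeneous components nontrivially), reduction of $Q$ to an extraspecial/critical configuration, and then the character estimate $\dim\centralizer{V}{X}=\frac1r\bigl(\dim V+\sum_{1\ne x\in X}\beta(x)\bigr)$ combined with $|\beta(x)|^2=|\centralizer{Q/\centerof{Q}}{x}|$. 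Your arithmetic correctly isolates the genuine exception: the estimate can only fail when $r-1\ge q^{n}$ with all $\beta(x)=-1$, forcing $r=q^{n}+1$, which for odd $q$ is even and hence not prime, leaving exactly the Fermat-prime, $Q\cong 2^{1+2n}_{-}$ case that the hypothesis ``$Q$ abelian when $q=2$ and $r$ Fermat'' is designed to exclude. That said, be aware that the proposal is a strategy rather than a proof: the two steps you yourself flag --- the descent from a general $q$-group to an extraspecial one while preserving faithfulness of $X$ and the surviving hypotheses, and the central-product/tensor-product splitting with its Gauss-sum sign bookkeeping when $X$ has fixed points on $Q/\centerof{Q}$ --- are precisely where the real content of Aschbacher's Sections 34--36 lives, and they are not carried out here. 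There are also small unexecuted fixups (e.g., after extending scalars the module may decompose and faithfulness of $Q$ on a chosen constituent must be re-established; the kernel of $XQ$ on $W$ in the first reduction need not lie in $Q$). For the purposes of this paper, citing the result as the authors do is the right call; your sketch would need the extraspecial-representation machinery filled in to stand alone.
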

	
	The paper \cite{REU} establishes for $A$-groups that $\Gamma(G)$ will be connected if and only if $\Gamma(G/\centerof{G})$ is connected. When $G/\centerof{G}$ is solvable, $\Gamma(G/\centerof{G})$ is disconnected if and only if $G/\centerof{G}$ is a Frobenius or 2-Frobenius group.  A 2-Frobenius group is a group $X$ with normal subgroups $H$ and $K$ such that $H$ is a Frobenius group with kernel $K$ and $X/K$ is a Frobenius group with kernel $H/K$.  Since we are interested in finding the upper bound of the diameter of a connected commuting graph, we will focus our attention on the case when $G$ is a solvable $A$-group with a connected commuting graph.  That is when $G/\centerof{G}$ is neither a Frobenius nor 2-Frobenius group.  We will thus use the following hypothesis in many of the proofs in Sections 3 and 4.
	\begin{hypothesis}\label{hyp2}
		Let $G$ be a solvable, nonabelian $A$-group such that $G/\centerof{G}$ is neither a Frobenius nor 2-Frobenius group.
	\end{hypothesis}
	
	From \cite{REU}, we also have for $A$-groups that, when connected, the diameter of $\Gamma(G)$ and $\Gamma(G/\centerof{G})$ will be the same. Using this result, we may additionally narrow our focus down to solvable $A$-groups with a trivial center.  For if $\centerof{G}\neq 1$, then we can look instead at the commuting graph of $G/\centerof{G}$, which is a group with a trivial center.  Note that when $\centerof{G}$ is trivial and $G$ has a connected commuting graph, we then get that $G$ is neither a Frobenius nor 2-Frobenius group. 
	
	Lastly, we will establish some notation that will be used throughout the rest of the paper.  For any group $G$, let $G^{\#}$ represent the set of nontrivial elements in $G$.    Let $\pi(G)$ represent the set of prime divisors of the order of $G$.  If two noncentral elements commute, we will write $x \sim y$ as such notation helps us to count the length of a path between $x$ and $y$ in $\Gamma(G)$.  If $x$ and $y$ are noncentral elements, then $d(x,y)$ denotes the distance between $x$ and $y$ in the commuting graph.
	
	\section{Case When $G'$ is Abelian}
	Before investigating $A$-groups in general, we will first investigate the case where $G$ has derived length 2 and is thus metabelian.  In these groups, we get an even lower bound on the diameter of the commuting graph. 
	
	\begin{lemma}\label{need}
		Let $G$ be a solvable $A$-group of derived length 2 with a trivial center such that $G'$ is not a Hall subgroup.  Let $g$ be an element of $G$ and $p$ a prime dividing the order of $g$.  If the $p$-part of $g$ does not lie in $G'$ and does not lie any absolute system normalizer of $G$, then $\centralizer{G'}{g} \neq 1$ and there exists an absolute system normalizer $M$ of $G$ such that $\centralizer{M}{g} \neq 1$.
	\end{lemma}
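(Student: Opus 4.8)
The plan is to realize $G$ as $G' \rtimes M_0$ for a carefully chosen absolute system normalizer $M_0$ and to squeeze both conclusions out of the single fact that the $p'$-part $g_{p'}$ centralizes the $p$-part $g_p$. Since $G$ has derived length $2$, $G'$ is abelian, and by Lemma~\ref{ML} any absolute system normalizer $M_0$ of $G$ is a complement to $G'$, with $M_0 \cong G/G'$ abelian; consequently the conjugation action of an element of $G$ on $G'$ depends only on its coset modulo $G'$. Moreover, because all absolute system normalizers are conjugate and $G = G'M_0$, they are exactly the $G'$-conjugates of $M_0$.

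First I would fix the location of $g_p$. Write $g = g_p g_{p'}$ with $g_p,g_{p'} \in \langle g\rangle$ the $p$- and $p'$-parts. Choose a Sylow $p$-subgroup $P$ of $G$ containing $g_p$ and, via Hall's theory, a Sylow system of $G$ whose $p$-component is $P$; let $M_0$ be the system normalizer of that system. As $G$ is an $A$-group, $P$ is abelian; the Sylow $p$-subgroup $G'_p$ of $G'$ is characteristic in $G'$, hence normal in $G$, hence contained in $P$; and a short order count (using $|G| = |G'|\,|M_0|$ and that $M_0$ normalizes $P$) shows $M_0 \cap P$ is a Sylow $p$-subgroup of $M_0$ and $P = G'_p \times (M_0 \cap P)$. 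Writing $g_p = nm$ in this decomposition, $n \in G'_p$, $m \in M_0 \cap P$, and $m$ centralizes $G'_p$ since $P$ is abelian. From $g_p \notin G'$ we get $m \neq 1$; and since a $G'$-conjugate ${}^{t^{-1}}g_p$ lies in $M_0$ exactly when $n = t\cdot{}^{m}(t^{-1})$, the hypothesis that $g_p$ lies in no absolute system normalizer forces $n \neq 1$ (take $t = 1$).

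The main step is to expand the relation $g_{p'} g_p g_{p'}^{-1} = g_p$. Writing $g_{p'} = n_y \mu_y$ with $n_y \in G'$ and $\mu_y \in M_0$, and using that $M_0$ is abelian (so $\mu_y$ commutes with $m$) and that $G'$ is abelian, the relation reduces to
\[
{}^{\mu_y}n \;=\; n\cdot c,\qquad c := ({}^{m}n_y)\,n_y^{-1}\in G'.
\]
Here $c$ has trivial $p$-component because $m$ centralizes $G'_p$, while ${}^{\mu_y}n \in G'_p$ because $n \in G'_p$ and $\mu_y$ normalizes $G'_p$; hence $c = n^{-1}\,{}^{\mu_y}n \in G'_p$, and therefore $c = 1$. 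This yields simultaneously $n_y \in \centralizer{G'}{m}$ and ${}^{\mu_y}n = n$. The first gives that $m$ commutes with $g_{p'} = n_y\mu_y$, and since $m$ also commutes with $g_p$ (both lie in the abelian group $P$), the nontrivial element $m$ of the system normalizer $M_0$ commutes with $g$; so $\centralizer{M}{g} \neq 1$ with $M = M_0$. The second gives that $\mu_y$, hence $g_{p'}$, centralizes $n$; since $n \in G'_p \subseteq P$ also commutes with $g_p$, the nontrivial element $n$ of $G'$ commutes with $g$, so $\centralizer{G'}{g} \neq 1$.

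I expect the delicate point to be exactly the bookkeeping in the displayed identity: one must verify that $n$ is supported only at the prime $p$, that $\mu_y$ respects the Sylow decomposition of the abelian group $G'$, and that the $p$-part of $c$ vanishes precisely because $m$ — sitting inside the abelian Sylow $p$-subgroup $P$ — centralizes $G'_p$. It is this collapse $c = 1$ that splits the centralizer relation into the two clean statements above. The assumption that $G'$ is not a Hall subgroup is not used directly in the argument; without it no $p$-element could lie outside both $G'$ and every absolute system normalizer, so the hypotheses would be vacuous.
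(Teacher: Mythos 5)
Your proof is correct, and it takes a genuinely different route from the paper's. The paper decomposes $g$ into all of its prime parts $g_p, g_{q_1},\dots,g_{q_r}$ and writes each as a product $m_{q_i}f_{q_i}$ with the factors $m_{q_i}$ lying in possibly \emph{different} conjugates of a fixed absolute system normalizer $M$; the bulk of its proof is then a conjugacy-counting argument inside a Hall $\{p,q_i\}$-subgroup (showing the number of conjugates of $K_0$ equals the product of the numbers of conjugates of $M_p$ and $M_{q_i}$) to force $m_p\sim m_{q_i}$, followed by a coprime-order power trick to split $m_{q_i}^{-1}m_p\sim f_{q_i}f_p$ into $f_{q_i}\sim m_p$ and $m_{q_i}\sim f_p$. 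You sidestep all of this by adapting the system normalizer to the prime $p$ at the outset: choosing $M_0$ to normalize a Sylow $p$-subgroup $P\ni g_p$ puts $m$ in $M_0$ on the nose, and then you only need the single decomposition $g_{p'}=n_y\mu_y$ in $G=G'M_0$ rather than one per prime, so the ``different conjugates of $M$'' problem never arises. Your key identity ${}^{\mu_y}n=nc$ with $c=({}^{m}n_y)n_y^{-1}$ checks out (both sides of $g_pg_{p'}=g_{p'}g_p$ have the same $M_0$-part $\mu_y m$, and $G'$ is abelian), and the collapse $c\in G'_p\cap G'_{p'}=1$ is exactly the right observation: it delivers $n_y\in\centralizer{G'}{m}$ and ${}^{\mu_y}n=n$ simultaneously, whence $m\in\centralizer{M_0}{g}^{\#}$ and $n\in\centralizer{G'}{g}^{\#}$. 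The supporting facts you invoke (that $P=G'_p\times(M_0\cap P)$ via the normal-Sylow/order count, that $n\neq 1$ and $m\neq 1$ follow from the two non-containment hypotheses, and that $G'_p$, $G'_{p'}$ are normal in $G$) are all sound, and your closing remark that the ``$G'$ is not Hall'' hypothesis is redundant — it is forced by the existence of such a $g_p$ — is also correct. The trade-off is that the paper's longer argument extracts slightly finer information (each $m_{q_i}$ and $f_{q_i}$ individually commutes with $m_p$ and $f_p$), but for the lemma as stated your argument is shorter and cleaner.
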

	
	\begin{proof}
		Let $G$ be a solvable $A$-group of derived length 2 with a trivial center such that $G'$ is not a Hall subgroup.  From Lemma \ref{ML}, we have that $G=MG'$, where $M$ is an absolute system normalizer of $G$ and $M \cap G' = 1$. Since $\centerof{G}=1$, note that $F(G)=G'$ by Lemma \ref{FitGroup}.  Let $g$ be an element of $G$ such that the $p$-part of $g$ does not lie in $G'$ for some prime $p$.  Denote the $p$-part of $g$ by $g_p$.  If $g_p$ does not lie in any absolute system normalizer of $M$, then we have that $p$ divides both $\lvert G' \rvert$ and $\lvert M \rvert$.  Let $P$ be a Sylow $p$-subgroup of $G$ containing $g_p$. We have that $P = M_p \times O_p(G)$, where $M_p$ is the Sylow $p$-subgroup of an appropriately chosen conjugate of $M$.  Since $g_p \notin M_p$, we have that $g_p = m_pf_p$, with $m_p \in M_p$ and $f_p \in O_p(G)$.  Note that because $m_p$ and $f_p$ lie in the same Sylow $p$-subgroup of $G$, they will commute.  We will show that $g \sim m_p$ and $g \sim f_p$.
		
		If $g = g_p$, then $g$ is a $p$-element. Furthermore, $g$ lies in the same Sylow subgroup as $m_p$ and $f_p$, so we have that $g \sim m_p$ and $g \sim f_p$.  So suppose that the order of $g$ is divisible by at least 2 distinct primes.  We can write $g = g_p g_{q_1}g_{q_2} \cdots g_{q_r}$, where $g_{q_i}$ represents the $q_i$-part of $g$ for distinct primes $q_1, q_2, \cdots,q_r$. To show that $g \sim m_p$ and $g\sim f_p$, we will show that $g_{q_i} \sim m_p$ and $g_{q_i} \sim f_p$ for each $i$.
		
		We can write $g_{q_i} = m_{q_i}f_{q_i}$, where $m_{q_i}$ lies in a conjugate of $M$ and $f_{q_i} \in G'$.  If $m_{q_i} =1$, then $g_{q_i} = f_{q_i} \in G'$.  As $G'$ is abelian, we have that $f_p \sim f_{q_i}$. 
		Therefore, since $g_p \sim g_{q_i}=f_{q_i}$, we get that $f_{q_i}m_pf_p = m_pf_pf_{q_i} = m_pf_{q_i}f_p$.  Hence, $f_{q_i} \sim m_p$, and we are done.  Therefore, suppose that $m_{q_i} \neq 1$.  We then get the following
		\begin{align*}
			g_pg_{q_i} &= g_{q_i}g_p \\ m_pf_pm_{q_i}f_{q_i} &= m_{q_i}f_{q_i}m_pf_p \\ m_pf_pf_{q_i}m_{q_i} &= m_{q_i}f_{q_i}f_p m_p \\ m_pf_{q_i}f_pm_{q_i} &= m_{q_i}f_{q_i}f_p m_p \\ m_{q_i}^{-1}m_p \left( f_{q_i}f_p \right) &= \left( f_{q_i}f_p \right) m_{p}m_{q_i}^{-1}
		\end{align*}
		Note that we have not assumed that $m_p$ and $m_{q_i}$ lie in the same conjugate of $M$, so we must show that these elements commute. To do so, let $K$ be a Hall $\{p,q_i\}$-subgroup of $G$ containing $g_pg_{q_i}$. Let $K_0$ be the intersection of $K$ with an appropriately chosen conjugate of $M$ and let $K_1 = K \cap G'$. Let $M_p$ and $M_{q_i}$ be the Sylow $p$- and $q_i$-subgroups of $K_0$ respectively, and let $F_p$ and $F_q$ be the Sylow $p$- and $q$-subgroups of $K$ respectively.  Note that $F_p = O_p(G)$ and $F_q = O_q(G)$.  We will show that the number of conjugates of $K_0$ in $K$ is the number of conjugates of $M_p$ times the number of conjugates of $M_{q_i}$.  
		
		First, we will show that $\normalizer{K}{M_p} = \normalizer{F_{q_i}}{M_p} F_pK_0$.  Note that as $K_0$ is abelian, it will centralize $M_p$.  Similarly, as the elements of $F_p$ centralize every $p$-element in $G$, we have that $F_p$ centralizes $M_p$.  Thus, $\normalizer{F_{q_i}}{M_p}F_pK_0 \leq \normalizer{K}{M_p}$.  Thus, we need to show the reverse containment.  Let $x \in \normalizer{K}{M_p}$.  We can write $x = x_1x_0$ for some $x_1 \in K_1, x_0 \in K_0$.  Furthermore, note that $x_1 = k_{q_i}k_{p}$, where $k_{q_i}\in F_{q_i},k_{p}\in F_p$.  Then, we have that
		\begin{align*}
			M_p = xM_px^{-1} &= \left(k_{q_i}k_px_0\right)M_p \left(k_{q_i}k_px_0\right)^{-1} \\ &= k_{q_i}k_px_0 M_p x_0^{-1} k_p^{-1}k_{q_i}^{-1} \\ &= k_{q_i}k_pM_pk_p^{-1}k_{q_i}^{-1} \\ &= k_{q_i}M_p k_{q_i}^{-1}
		\end{align*}
		Hence, $k_{q_i} \in \normalizer{F_{q_i}}{M_p}$.  Therefore, $x = k_{q_i}k_px_0 \in \normalizer{F_{q_i}}{M_p} F_pK_0$.  Through a similar argument, we can show that $\normalizer{K}{M_{q_i}} = \normalizer{F_p}{M_{q_i}}F_{q_i}K_0$ and that $\normalizer{K}{K_0} = \normalizer{K_1}{K_0}K_0$.
		
		Next, we want to show that $\normalizer{K_1}{K_0} = \normalizer{K_1}{M_p} \cap \normalizer{K_1}{M_{q_i}}$.  To do this, first suppose $x \in \normalizer{K_1}{K_0}$.  Then, we have that $xM_px^{-1} \subseteq K_0$, so $x$ will send $M_p$ to one of its conjugates in $K_0$.  However, as $K_0$ is abelian (as it is a subgroup of a conjugate of $M$, which is abelian), we have that $M_p$ is the only Sylow $p$-subgroup of $K_0$.  Hence, $xM_px^{-1} = M_p$.  Similarly, we have that $xM_{q_i}x^{-1} = M_{q_i}$.  Therefore, $x \in \normalizer{K_1}{M_p} \cap \normalizer{K_1}{M_{q_i}}$.  Now, suppose $x \in \normalizer{K_1}{M_p} \cap \normalizer{K_1}{M_{q_i}}$.  Then, we have that $xM_px^{-1}=M_p$ and $xM_{q_i}x^{-1} = M_{q_i}$.  Thus, we have that $xK_0x^{-1} = x(M_pM_{q_i})x^{-1} = xM_px^{-1}xM_{q_i} x^{-1} = M_pM_{q_i} = K_0$.  Hence, $x\in\normalizer{K_1}{K_0}$.  Thus, we have that $\normalizer{K_1}{K_0} = \normalizer{K_1}{M_p}\cap\normalizer{K_1}{M_{q_i}} = F_p\normalizer{F_{q_i}}{M_p} \cap \normalizer{F_p}{M_{q_i}}F_{q_i} = \normalizer{F_p}{M_{q_i}}\normalizer{F_{q_i}}{M_p}$.
		
		Using this, we see that
		\[\lvert K : \normalizer{K}{M_p} \rvert = \frac{\lvert K \rvert}{\lvert K_0 \rvert \lvert F_p \rvert \lvert \normalizer{F_{q_i}}{M_p} \rvert} = \frac{\lvert F_{q_i} \rvert}{\lvert \normalizer{F_{q_i}}{M_p} \rvert}\]
		\[\lvert K : \normalizer{K}{F_{q_i}} \rvert = \frac{\lvert K \rvert}{\lvert K_0 \rvert \lvert F_{q_i} \rvert \lvert \normalizer{F_p}{M_{q_i}} \rvert} = \frac{\lvert F_p \rvert}{\lvert \normalizer{F_p}{M_{q_i}} \rvert}\]
		\[\lvert K : \normalizer{K}{K_0)} \rvert = \frac{\lvert K \rvert}{\lvert K_0 \rvert \lvert \normalizer{K_1}{K_0} \rvert} = \frac{\lvert K_1 \rvert}{ \lvert \normalizer{K_1}{K_0} \rvert} = \frac{\lvert F_p \rvert \lvert F_{q_i} \rvert}{\lvert \normalizer{F_p}{M_{q_i}} \rvert \lvert \normalizer{F_{q_i}}{M_p} \rvert}\]
		Thus, the number of conjugates of $K_0$ in $K$ is exactly the number of conjugates of $M_p$ times the number of conjugates of $M_{q_i}$.  This means that each conjugate of $M_p$ commutes with each conjugate of $M_{q_i}$ to form all the distinct conjugates of $K_0$, meaning that each conjugate of $M_p$ centralizes each conjugate of $M_{q_i}$.  Thus, we have that $m_p \sim m_{q_i}$.
		
		This gives us that $m_{q_i}^{-1}m_p \sim f_{q_i}f_p$.  Therefore, $\left(m_{q_i}^{-1}m_p \right)^{n_1} \sim \left(f_{q_i}f_p \right)^{n_2}$ for any $n_1,n_2 \in \mathbb{Z}$. Therefore, we have that $m_{q_i} \sim f_p$ and $f_{q_i} \sim m_p$.  Therefore, $g_{q_i} \sim m_p$ and $g_{q_i} \sim f_p$.  Since this holds for all $i$, we therefore have that $g \sim m_p$ and $g \sim f_p$.  Hence, $\centralizer{M}{g} \neq 1$ and $\centralizer{G'}{g} \neq 1$.
	\end{proof}
	
	\begin{lemma}\label{d2}
		Assume Hypothesis \ref{hyp2} and suppose that $\centerof{G}=1$. Furthermore, suppose that the derived length of $G$ is 2.  Then $\Gamma(G)$ is connected of diameter at most 4.
	\end{lemma}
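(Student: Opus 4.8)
I would prove simultaneously that $\Gamma(G)$ is connected and has diameter at most $4$ by showing that every noncentral element lies within distance $2$ of the clique $(G')^{\#}$, and by controlling the (few) vertices that are at distance exactly $2$ from it so that two such vertices can be joined through a single common element of $G'$. Here is the setup I would use: by Lemma~\ref{FitGroup} together with $\centerof{G}=1$ and $G''=1$ we have $\fit{G}=G'$, an abelian group; by Lemma~\ref{ML} we may write $G=MG'$ with $M$ an absolute system normalizer, $M$ abelian, and $M\cap G'=1$. Since $\centralizer{M}{G'}\le\centerof{G}=1$, $M$ acts faithfully on $G'$, every nontrivial element of $G'$ or of any conjugate of $M$ is noncentral, and $(G')^{\#}$ is a clique in $\Gamma(G)$ (nonempty since $G$ is nonabelian). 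Also note that since $G=MG'$ with $M\le\normalizer{G}{M}$, every $G$-conjugate of $M$ has the form $M^{n}$ with $n\in G'$.

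\textbf{Step 1: locating each vertex.} I would show that each noncentral $x$ either has $\centralizer{G'}{x}\ne 1$ (so $d(x,(G')^{\#})\le 1$), or $x$ lies in some conjugate $M^{n}$ of $M$ and acts fixed-point-freely on $G'$. If $x\in G'$ this is clear. If $G'$ is a Hall subgroup, then $\pi(M)\cap\pi(G')=\varnothing$; the $\pi(G')$-part $x_{0}$ of $x$ lies in the normal Hall subgroup $G'$ and commutes with the $\pi(M)$-part $x_{1}$, which lies in a Hall $\pi(M)$-subgroup, hence in a conjugate of $M$. If $x_{0}\ne 1$ then $x\sim x_{0}\in(G')^{\#}$; otherwise $x=x_{1}$ lies in a conjugate of $M$ and either has nontrivial centralizer in $G'$ or is fixed-point-free. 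If $G'$ is not a Hall subgroup, Lemma~\ref{need} handles every $x$ having a prime part lying outside $G'$ and outside all system normalizers, giving $\centralizer{G'}{x}\ne 1$; for the remaining $x\notin G'$, strip off the product $x_{0}$ of the prime parts of $x$ lying in $G'$ ($x$ is adjacent to $x_{0}\in(G')^{\#}$ if $x_{0}\ne 1$), which leaves $x$ equal to a product of pairwise commuting prime parts each lying in a conjugate of $M$, and the count of conjugates of $K_{0}$ carried out in the proof of Lemma~\ref{need} shows these parts all sit inside one conjugate $M^{n}$; again $x$ is fixed-point-free in $M^{n}$ unless $\centralizer{G'}{x}\ne 1$.

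\textbf{Step 2 and Step 3: a common target and path assembly.} Since $G$ is not a Frobenius group (Hypothesis~\ref{hyp2}), it is in particular not Frobenius with kernel $G'$. Because $\centralizer{G}{a}=M\,\centralizer{G'}{a}$ for $a\in M$ (a short direct computation, using $G'\trianglelefteq G$ and $M$ abelian), Lemma~\ref{Frob1} forces some $a\in M^{\#}$ with $\centralizer{G'}{a}\ne 1$; hence for every conjugate $M^{n}$ the set $R(M^{n}):=\bigcup_{1\ne u\in M^{n}}\centralizer{G'}{u}$ is a nonempty subset of $(G')^{\#}$. Moreover, since any two conjugates of $M$ differ by conjugation by an element $n\in G'$ and $G'$ is abelian, $\centralizer{G'}{u^{n}}=(\centralizer{G'}{u})^{n}=\centralizer{G'}{u}$, so $R(M^{n})=R$ is one and the same set for all conjugates. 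Now I assemble paths. If $x,y$ are noncentral and both have nontrivial centralizer in $G'$, pick $a\in\centralizer{G'}{x}^{\#}$, $b\in\centralizer{G'}{y}^{\#}$, giving $x\sim a\sim b\sim y$, so $d(x,y)\le 3$. If $x$ is fixed-point-free (in some $M^{n}$) and $y$ has nontrivial centralizer in $G'$, choose $u\in(M^{n})^{\#}$ with $\centralizer{G'}{u}\ne 1$ (necessarily $u\ne x$), $c\in\centralizer{G'}{u}^{\#}\subseteq R$, and $b\in\centralizer{G'}{y}^{\#}$; then $x\sim u\sim c\sim b\sim y$, so $d(x,y)\le 4$. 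If both $x,y$ are fixed-point-free, they reach a common $c\in R$ in two steps, so $d(x,y)\le 4$. In all cases $d(x,y)\le 4$, and since every vertex lies within distance $2$ of the connected set $(G')^{\#}$, $\Gamma(G)$ is connected; hence $\Gamma(G)$ is connected of diameter at most $4$.

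\textbf{Main obstacle.} The delicate point is Step~1 in the case where $G'$ is not a Hall subgroup: once the $G'$-part is removed, one must show that an element all of whose remaining prime parts lie in system normalizers actually lies inside a \emph{single} conjugate of $M$. This is exactly what the elaborate conjugacy-count in the proof of Lemma~\ref{need} delivers; without it one cannot guarantee that a fixed-point-free such element has its centralizer governed by one abelian complement, and the clean ``common target $R$'' argument of Step~3 would not apply. The rest is bookkeeping: verifying $\centralizer{G}{a}=M\,\centralizer{G'}{a}$, the invariance $\centralizer{G'}{u^{n}}=\centralizer{G'}{u}$ for $n\in G'$, and that the various path endpoints are genuinely distinct vertices.
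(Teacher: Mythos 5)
Your proposal is correct and takes essentially the same route as the paper: the same decomposition $G=MG'$ from Lemma~\ref{ML}, the same case split according to where the prime parts of an element live (with Lemma~\ref{need} covering the mixed case), and the same use of the non-Frobenius hypothesis to produce $x^*\in M^{\#}$ with $\centralizer{G'}{x^*}\neq 1$. The only difference is packaging: the paper collapses your ``clique $(G')^{\#}$ plus invariant set $R$'' argument into a single hub element $c^*\in\centerof{\langle x^*\rangle G'}^{\#}$ at distance at most $2$ from every vertex (using that $\langle x^*\rangle G'\trianglelefteq G$), and, like you, it treats as immediate the claim that an element all of whose prime parts lie in conjugates of $M$ itself lies in a single conjugate of $M$.
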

	
	\begin{proof}
		Let the derived length of $G$ be 2.  Thus, $G$ is metabelian and $G'$ is abelian.  From Lemma \ref{ML}, we have that $G=MG'$, where $M$ is an absolute system normalizer of $G$, and $M \cap G' = 1$.  As $M \cong G/G'$, we have that $M$ is abelian.  Additionally, since $\centerof{G}=1$, we have that $F(G) = G'$ by Lemma \ref{FitGroup}. 
		
		Since $G$ is not a Frobenius group, there exists $x^* \in M^{\#}$ such that $\centralizer{G'}{x^*} \neq 1$.  Consider $\langle x^* \rangle G'$.  Let $g \in G^{\#}$ and note that we can write $g = mg'$, where $m \in M$ and $g \in G'$.  We have that $g \langle x^* \rangle G' g^{-1} = (mg')\langle x^* \rangle G' (mg')^{-1} = m \langle x^* \rangle G' m^{-1}$ as $g' \in G' \leq \langle x^* \rangle G'$.  Furthermore, as $m \in M$ and $G' \trianglelefteq G$, we have that $m \langle x^* \rangle G' m^{-1} = \langle x^* \rangle G'$.  Therefore, $\langle x^* \rangle G' \trianglelefteq G$.  Hence, as $M$ intersects $\langle x^* \rangle G'$ nontrivially, we have that every conjugate of $M$ intersects $\langle x^* \rangle G'$ nontrivially.  Since $\centralizer{G'}{x^*} \neq 1$, we have that $\centerof{\langle x^* \rangle G'}\neq 1$.  So fix $c^* \in \centerof{\langle x^* \rangle G'}^{\#}$.  We will show that $d(g,c^*) \leq 2$ for all $g \in G^{\#}$.
		
		Let $g \in G^{\#}$.  If there exists a positive integer $n < o(g)$ such that $g^n \in G'$, we have that $g \sim g^n \sim c^*$.  Hence, $d(g,c^*) \leq 2$.  Suppose that there does not exist such an integer $n$.  In particular, this gives us that $g \notin G'$.  If $g$ lies in some conjugate of $M$, we have that $g \sim x \sim c^*$, where $x$ lies in the intersection of $\langle x^* \rangle G'$ with an appropriately chosen conjugate of $M$.  Hence, $d(g,c^*) \leq 2$.  That leaves the case where $g$ is neither an element of $G'$ or of a conjugate of $M$.  Hence, there exists a prime $p$ such that the $p$-part of $g$ is neither an element of $G'$ or a conjugate of $M$.  Note in this case, $G'$ is not a Hall subgroup of $G$.  Therefore, $g$ satisfies the hypothesis of Lemma \ref{need}, and so $\centralizer{G'}{g} \neq 1$.  Hence, we have that $g \sim f \sim c^*$, where $f \in \centralizer{G'}{g}^{\#}$.  Therefore, $d(g,c^*) \leq 2$.  Thus, we have that for any $g \in G$, $d(g,c^*) \leq 2$.  This means that the diameter of the commuting graph of $G$ will be at most 4.
	\end{proof}
	
	Since we have proven that the diameter of the commuting graph of an $A$-group with a trivial center will be at most 4, we will now prove that any $A$-group that has a connected commuting graph will have diameter at most 4.
	
	\begin{lemma}\label{d2.2}
		Let $G$ satisfy Hypothesis \ref{hyp2}, and assume that $G$ has derived length 2.  Then, $\Gamma(G)$ is connected with diameter at most 4.
	\end{lemma}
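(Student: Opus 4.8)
The plan is to reduce at once to the trivial-center case already settled in Lemma~\ref{d2}. Write $\bar G = G/\centerof{G}$. The first task is to check that $\bar G$ inherits every hypothesis of Lemma~\ref{d2}. It is a solvable $A$-group, since a quotient of a solvable group is solvable and a Sylow subgroup of $\bar G$ is the image of a Sylow subgroup of $G$, hence abelian. It has trivial center: by Lemma~\ref{ACenter} we have $G' \cap \centerof{G} = 1$, and as recalled in Section~2 this forces $\centerof{G/\centerof{G}} = 1$. It is nonabelian, because $G' \cap \centerof{G} = 1$ gives $\bar G' \cong G'/(G' \cap \centerof{G}) \cong G' \neq 1$ (here $G' \neq 1$ since $G$ has derived length $2$, or directly from Hypothesis~\ref{hyp2}).

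Next I would confirm that $\bar G$ still has derived length $2$ and that Hypothesis~\ref{hyp2} holds for $\bar G$ with $\centerof{\bar G} = 1$. Since the commutators of central elements are trivial, one checks $\bar G'' = G''\centerof{G}/\centerof{G}$; as $G'' = 1$ (derived length $2$), this is trivial, while $\bar G' \neq 1$ by the previous paragraph, so $\bar G$ has derived length exactly $2$. For the Frobenius condition, note that $\bar G / \centerof{\bar G} = \bar G$ itself, and Hypothesis~\ref{hyp2} applied to $G$ states precisely that $G/\centerof{G} = \bar G$ is neither a Frobenius nor a $2$-Frobenius group; hence $\bar G$ satisfies Hypothesis~\ref{hyp2}.

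With these verifications done, Lemma~\ref{d2} applies to $\bar G$ and shows that $\Gamma(\bar G)$ is connected of diameter at most $4$. Finally I would invoke the results of \cite{REU} quoted in Section~2: for an $A$-group $G$, $\Gamma(G)$ is connected if and only if $\Gamma(G/\centerof{G})$ is connected, and when connected the two graphs have the same diameter. Combining this with the conclusion for $\bar G$ gives that $\Gamma(G)$ is connected with diameter at most $4$, completing the proof. There is no serious obstacle here: the lemma is a bookkeeping reduction, and the only points needing care are that passing to $G/\centerof{G}$ does not collapse the derived length (it does not, again by $G' \cap \centerof{G} = 1$) and that $\bar G$ genuinely satisfies Hypothesis~\ref{hyp2}, both of which are immediate from Lemma~\ref{ACenter} and the remarks already recorded.
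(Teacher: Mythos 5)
Your proposal is correct and follows essentially the same route as the paper: pass to $G/\centerof{G}$, use $G' \cap \centerof{G} = 1$ from Lemma \ref{ACenter} to see that the quotient still has derived length $2$ and trivial center, apply Lemma \ref{d2}, and transfer the diameter bound back via the results of \cite{REU}. Your verification of the hypotheses for the quotient is somewhat more detailed than the paper's, but the argument is the same.
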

	
	\begin{proof}
		Let $G$ be a solvable $A$-group satisfying Hypothesis \ref{hyp2} and let $Z=\centerof{G}$.  We then have that $G/Z$ is neither a Frobenius nor a 2-Frobenius group.  Furthermore, the commuting graphs of $G$ and $G/Z$ have the same diameter.  Recall that Lemma \ref{ACenter} gives us that $G' \cap Z = 1$.  Thus, we have that $(G/Z)' = G'Z/Z \cong G'$. Therefore, as $G$ is metablian and thus $G'$ abelian, we get that $(G/Z)'$ is abelian. Therefore, the derived length of $G/Z$ is 2.  From the previous lemma, we get that the diameter of $\Gamma(G/Z)$ is at most 4.  Hence, the diameter of $\Gamma(G)$ is at most 4.
	\end{proof}
	
	Note that SmallGroup(60,7) from the GAP Small Groups Library is an example of an $A$-group of derived length 2 with a connected commuting graph of diameter 4, so the bound in Lemma \ref{d2} cannot be improved.  The following two corollaries apply the results of Lemma \ref{d2.2} to particular classes of $A$-groups that have derived length 2.
	\begin{corollary}
		Suppose that $G$ satisfies Hypothesis \ref{hyp2} and that the order of $G$ is $p^aq^b$ for distinct primes $p$ and $q$.  Then $\Gamma(G)$ is connected of diameter at most 4.
	\end{corollary}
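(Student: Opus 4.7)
The plan is to reduce the corollary to Lemma \ref{d2.2} by showing that any solvable $A$-group $G$ with $\pi(G)=\{p,q\}$ automatically has derived length at most $2$. Since Hypothesis \ref{hyp2} forces $G$ to be nonabelian, its derived length is then exactly $2$, and Lemma \ref{d2.2} immediately delivers the desired bound on $\Gamma(G)$.

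The heart of the argument is to prove $G'' = 1$. First I would observe that $F(G)$ is nilpotent, hence the direct product of its Sylow subgroups; since $G$ is an $A$-group, each such Sylow subgroup is abelian, so $F(G) = O_p(G) \times O_q(G)$ is abelian. Solvability of $G$ gives $\centralizer{G}{F(G)} \leq F(G)$, so conjugation induces an embedding
\[
G/F(G) \hookrightarrow \textnormal{Aut}(F(G)) = \textnormal{Aut}(O_p(G)) \times \textnormal{Aut}(O_q(G)),
\]
where the direct factor decomposition uses $\gcd(|O_p(G)|,|O_q(G)|)=1$.

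Next I would exploit the abelianness of the Sylow subgroups of $G$ to show $G/F(G)$ is abelian. Fix $P \in \syl{p}{G}$ and $Q \in \syl{q}{G}$. Because $O_p(G) \leq P$ and $P$ is abelian, $P$ centralizes $O_p(G)$, so the image of $P$ in $\textnormal{Aut}(F(G))$ sits entirely in the factor $\textnormal{Aut}(O_q(G))$. Symmetrically, the image of $Q$ lies in $\textnormal{Aut}(O_p(G))$. These two images therefore lie in commuting direct factors of $\textnormal{Aut}(F(G))$, and each is abelian since it is a quotient of the abelian Sylow subgroup. As $|PQ| = p^aq^b = |G|$, the images of $P$ and $Q$ together generate $G/F(G)$, so $G/F(G)$ is abelian. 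Hence $G' \leq F(G)$, and since $F(G)$ is abelian, $G'' = 1$.

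With $G$ now known to be metabelian and nonabelian, Lemma \ref{d2.2} finishes the proof. There is no substantial obstacle; the only point requiring care is verifying that the abelian Sylow subgroups of $G$ really do centralize $O_p(G)$ and $O_q(G)$ respectively and that $\textnormal{Aut}(F(G))$ splits as the advertised direct product, both of which follow cleanly from $\pi(G) = \{p,q\}$ together with the $A$-group hypothesis.
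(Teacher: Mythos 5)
Your proposal is correct, and it reaches the conclusion by the same reduction as the paper — establish that $G$ has derived length exactly $2$ and invoke Lemma \ref{d2.2} — but it differs in how the derived length bound is obtained. The paper simply cites Theorem 8.3 of Taunt's paper \cite{Taunt}, which asserts that a solvable $A$-group whose order has exactly two prime divisors is metabelian. You instead prove this fact from scratch, and your argument is sound: $F(G)=O_p(G)\times O_q(G)$ is abelian since $G$ is an $A$-group; solvability gives $\centralizer{G}{F(G)}\leq F(G)$, hence $\centralizer{G}{F(G)}=F(G)$ and $G/F(G)$ embeds in $\textnormal{Aut}(O_p(G))\times\textnormal{Aut}(O_q(G))$ (the splitting being valid because the two factors are characteristic, having coprime orders); an abelian Sylow $p$-subgroup $P$ contains and therefore centralizes $O_p(G)$, so its image lands in the $\textnormal{Aut}(O_q(G))$ factor, and symmetrically for $Q$; since $G=\langle P,Q\rangle$, the image of $G$ is generated by two abelian subgroups sitting in commuting direct factors, so $G/F(G)$ is abelian and $G'\leq F(G)$ is abelian. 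The trade-off is the usual one: the citation is shorter, while your version is self-contained and makes visible exactly where the two hypotheses (abelian Sylow subgroups and $|\pi(G)|=2$) are used — note that your argument genuinely needs $|\pi(G)|=2$, since with three or more primes the images of the Sylow subgroups need not pairwise commute and $A$-groups can indeed have larger derived length.
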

	
	\begin{proof}
		By Theorem 8.3 from \cite{Taunt}, we have that the derived length is at most 2.  As $G$ is non-Abelian, we have that the derived length is exactly 2.  Thus, by Lemma \ref{d2.2}, we have that $\Gamma(G)$ is connected of diameter at most 4.
	\end{proof}
	
	\begin{corollary}
		Suppose that $G$ is a group of cube-free odd order such that $G/\centerof{G}$ is neither a Frobenius nor 2-Frobenius group.  Then $\Gamma(G)$ is connected of diameter at most 4.
	\end{corollary}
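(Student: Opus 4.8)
The plan is to show that a group $G$ of cube-free odd order is a solvable $A$-group of derived length at most $2$, and then to invoke Lemma~\ref{d2.2}. Solvability is the Feit--Thompson theorem, and the $A$-group property is automatic: every Sylow subgroup of $G$ has order $p$ or $p^2$ and hence is abelian. So the real content is the bound on the derived length, and for this I would prove that $G/\fit{G}$ is abelian. Since $\fit{G}$ is nilpotent of cube-free order it is a direct product of abelian $p$-groups, hence abelian, so once $G/\fit{G}$ is abelian we get $G' \leq \fit{G}$ and $G'' = 1$.

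To see $G/\fit{G}$ is abelian, use that $G$ is solvable, so $\centralizer{G}{\fit{G}} = \fit{G}$ (as $\fit{G}$ is abelian), giving an embedding $G/\fit{G} \hookrightarrow \operatorname{Aut}(\fit{G}) = \prod_{p} \operatorname{Aut}(F_p)$, where $F_p$ is the characteristic Sylow $p$-subgroup of $\fit{G}$. Writing $\bar G_p$ for the image of $G/\fit{G}$ in $\operatorname{Aut}(F_p)$, the group $G/\fit{G}$ embeds in $\prod_p \bar G_p$, so it is enough to show each $\bar G_p$ is abelian. If $|F_p| = p$, then $\operatorname{Aut}(F_p) \cong C_{p-1}$ is cyclic. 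If $|F_p| = p^2$, then $p^2$ already divides $|\fit{G}|$, so cube-freeness forces $p \nmid |G/\fit{G}|$; thus $\bar G_p$ is an odd-order $p'$-subgroup of $\operatorname{Aut}(F_p)$, which is cyclic ($\cong C_{p-1}$) when $F_p \cong C_{p^2}$ and is an odd-order $p'$-subgroup of $GL(2,p)$ when $F_p \cong C_p \times C_p$.

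The key point is therefore that an odd-order $p'$-subgroup $H$ of $GL(2,p)$ is abelian. I would argue via Maschke's theorem: $V = \mathbb{F}_p^2$ is a semisimple $\mathbb{F}_p[H]$-module. If $V$ decomposes as a sum of two $1$-dimensional submodules, then $H$ cannot interchange them, since an odd-order group has no subgroup of index $2$; hence $H$ lies in the diagonal torus $C_{p-1} \times C_{p-1}$ and is abelian. If $V$ is irreducible, then the image of $H$ in $PGL(2,p) \leq PGL(2,\overline{\mathbb{F}_p})$ is a finite odd-order subgroup, and by the classification of finite subgroups of $PGL_2$ over an algebraically closed field the only odd-order ones are cyclic; so $H/(H \cap Z(GL_2))$ is cyclic with $H \cap Z(GL_2)$ central in $H$, whence $H$ is abelian. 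This completes the proof that $G/\fit{G}$ is abelian and $G$ has derived length at most $2$. Finally, assuming $G$ is nonabelian (otherwise $\Gamma(G)$ has no vertices), $G$ satisfies Hypothesis~\ref{hyp2} and has derived length exactly $2$, so Lemma~\ref{d2.2} applies and $\Gamma(G)$ is connected of diameter at most $4$.

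I expect the claim about odd-order $p'$-subgroups of $GL(2,p)$ to be the only nonroutine step; everything else is bookkeeping with the Fitting subgroup. An alternative to citing the $PGL_2$ classification in positive characteristic is to lift $H$ to characteristic $0$ and use Klein's classification over $\mathbb{C}$. It is also worth noting that this is exactly where oddness is used: for even cube-free order $GL(2,p)$ can contain a copy of $S_3$, and indeed there are even cube-free $A$-groups of derived length $3$, so the restriction to odd order cannot be dropped.
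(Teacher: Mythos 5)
Your proof is correct, but it reaches the key fact --- that a group of cube-free odd order has derived length at most $2$ --- by a genuinely different route than the paper. The paper's proof is essentially a two-line citation: cube-free order forces every Sylow subgroup to have order $p$ or $p^2$, hence abelian, so $G$ is an $A$-group; and then Theorem 8.4 of Taunt's paper is quoted verbatim for the statement that a group of cube-free odd order has derived length $2$, after which Lemma \ref{d2.2} finishes. You instead prove the derived-length bound from scratch: solvability via Feit--Thompson, then $\mathbf{C}_G(\mathbf{F}(G)) = \mathbf{F}(G)$, the embedding of $G/\mathbf{F}(G)$ into $\prod_p \operatorname{Aut}(F_p)$, and the observation that the only nontrivial case is an odd-order $p'$-subgroup of $GL(2,p)$, which you correctly show is abelian by Maschke plus the fact that an odd-order group cannot swap the two summands (reducible case) and Dickson's classification of finite subgroups of $PGL_2$ (irreducible case). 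All of these steps are sound, including the use of cube-freeness to force $p \nmid |G/\mathbf{F}(G)|$ when $|F_p| = p^2$, and your closing remark that oddness is genuinely needed (e.g.\ $S_3$ acting irreducibly on $C_7 \times C_7$ gives a cube-free $A$-group of derived length $3$) is accurate. What the paper's approach buys is brevity at the cost of an external reference; what yours buys is a self-contained argument that also makes visible exactly where the odd-order hypothesis enters. One small point of care: like the paper, you should (and do) note that the nonabelian hypothesis of Hypothesis \ref{hyp2} must be imposed, since the corollary as stated allows $G$ abelian, in which case $\Gamma(G)$ is empty.
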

	
	\begin{proof}
		Suppose that $G$ is a group of cube-free order and that the order of $G$ is odd.  Note that as $G$ has cube-free order, $G$ is an $A$-group.  By Theorem 8.4 from \cite{Taunt}, the derived length of $G$ is 2.  Hence, by Lemma \ref{d2.2}, we have that $\Gamma(G)$ is connected of diameter at most 4.
	\end{proof}
	
	\section{Main Result}
	We will now turn our attention to proving the bound on the diameter of the commuting graph of any solvable $A$-group.  To do so, we will assume that the diameter of the commuting graph is at least 7 and proceed by contradiction.  Let $F = F(G)$, $Z = \centerof{G}$, and $J$ be the preimage of $F(G/F)$.  From \cite{parkerSoluble}, we have that a solvable group with a trivial center that has a commuting graph of diameter 7 or higher has a fairly restricted structure.  The first thing we can say about such a group is that the Fitting subgroup is the centralizer of a minimal normal subgroup.
	
	\begin{lemma}[\cite{parkerSoluble} Lemma 3.4]\label{p4}
		Let $G$ be a solvable group with a trivial center and let $V$ be a minimal normal subgroup of $G$.  If the diameter of $\Gamma(G)$ is at least 7, then $F = \centralizer{G}{V}$.
	\end{lemma}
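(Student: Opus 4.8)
The plan is to prove the two inclusions of $F=\centralizer{G}{V}$ separately. The inclusion $F\le\centralizer{G}{V}$ is essentially free: since $G$ is solvable and $V$ is minimal normal, $V$ is an elementary abelian $p$-group, so being abelian and normal it lies in $F=\fit{G}$; and since $F$ is nilpotent with $[F,V]\trianglelefteq G$ and $[F,V]\le V$, minimality of $V$ forces $[F,V]=1$ (otherwise $[F,V]=V$, and iterating the commutator against $F$ contradicts nilpotence of $F$), so $F\le\centralizer{G}{V}$ holds unconditionally. Writing $C=\centralizer{G}{V}$, what remains is $C\le F$, equivalently that $C$ is nilpotent. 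I would argue this by contradiction: assume $C$ is not nilpotent (so $F<C$) and deduce that the diameter of $\Gamma(G)$ is at most $6$, contrary to hypothesis.

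The engine is that $V\le\centerof{C}$. Fix $v_0\in V^{\#}$; since $\centerof{G}=1$ this is a vertex of $\Gamma(G)$, and every element of $C^{\#}$ commutes with $v_0$, so $d(g,v_0)\le 1$ for all $g\in C^{\#}$. Hence it suffices to show that $d(g,v_0)\le 3$ for every vertex $g\in G^{\#}\setminus C$: routing any path through $v_0$ then gives that the diameter of $\Gamma(G)$ is at most $6$.

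So fix a vertex $g\notin C$ and look at the conjugation action of $\centralizer{G}{g}$ on the normal subgroup $C$. If some $1\ne y\in\centralizer{G}{g}$ has $\centralizer{C}{y}\ne 1$, then choosing $1\ne c\in\centralizer{C}{y}$ the walk $g\sim y\sim c\sim v_0$ has length at most $3$ (each of $y$, $c$, $v_0$ is noncentral because $\centerof{G}=1$), so $d(g,v_0)\le 3$. The only way this can fail is that $\centralizer{C}{y}=1$ for every $1\ne y\in\centralizer{G}{g}$; in particular $\centralizer{G}{g}\cap C=\centralizer{C}{g}=1$, and then $C\rtimes\centralizer{G}{g}$ is a Frobenius group with kernel $C$ by the characterization in Lemma \ref{Frob1}. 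Disposing of this last configuration is where I expect the real difficulty to lie. In it, $\centralizer{G}{g}\ne 1$ is a Frobenius complement --- hence of severely restricted structure --- and $g$ acts coprimely and fixed-point-freely on the \emph{non-nilpotent} normal subgroup $C$, all while $\centerof{G}=1$; the plan would be to exploit these constraints, together with fixed-point results such as Lemma \ref{Asch} and the structural analysis of solvable center-free groups of large commuting diameter carried out in \cite{parkerSoluble}, to contradict the standing assumption that $\Gamma(G)$ has diameter at least $7$ (one expects $G$ to inherit enough Frobenius-like structure that its commuting graph is disconnected or of small diameter, and in any case to be pushed back into one of the earlier cases). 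Once every vertex is shown to lie within distance $3$ of $v_0$, the diameter of $\Gamma(G)$ is at most $6$, the desired contradiction, and therefore $C=\centralizer{G}{V}=F$.
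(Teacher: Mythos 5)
This lemma is not proved in the paper at all: it is quoted verbatim from Lemma 3.4 of \cite{parkerSoluble}, so there is no in-text argument to compare yours against. Judged on its own terms, your write-up gets the cheap half right and sets the stage correctly: $V$ is elementary abelian, $V\le F$, $[F,V]\trianglelefteq G$ together with nilpotence of $F$ forces $[F,V]=1$, so $F\le C=\centralizer{G}{V}$; and since $V\le\centerof{C}$, fixing $v_0\in V^{\#}$ reduces everything to showing $d(g,v_0)\le 3$ for $g\notin C$. The dichotomy you then draw --- either some $y\in\centralizer{G}{g}^{\#}$ has $\centralizer{C}{y}\ne 1$, giving $g\sim y\sim c\sim v_0$, or else $\centralizer{C}{y}=1$ for every such $y$ and $C\centralizer{G}{g}$ is Frobenius with kernel $C$ --- is also sound.

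The genuine gap is that you stop exactly where the lemma's content begins. Everything you actually prove uses only $\centerof{G}=1$, never the hypothesis that $\Gamma(G)$ has diameter at least $7$; yet that hypothesis is indispensable, since the conclusion is false without it (in $G=S_3\times S_3$ one has $\centerof{G}=1$, $V=A_3\times 1$ minimal normal, and $\centralizer{G}{V}=A_3\times S_3\supsetneq F=A_3\times A_3$). So the Frobenius configuration you defer --- $\centralizer{G}{g}$ acting fixed-point-freely on the non-nilpotent normal subgroup $C$ --- cannot be waved away; it is the entire theorem, and ``exploit these constraints together with Lemma \ref{Asch} and the analysis of \cite{parkerSoluble}'' is a statement of intent, not an argument. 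Worse, the natural tool you point to, Lemma \ref{p9}, is proved in this paper \emph{using} Lemma \ref{p4}, so invoking that circle of ideas risks circularity unless you rebuild Parker's earlier lemmas (his Lemmas 3.1--3.3 on elements with nontrivial fixed points in $F$) from scratch. As it stands the proposal is an honest reduction to the hard case, not a proof.
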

	
	Furthermore, we get that the Fitting subgroup is a Hall subgroup of $J$.
	\begin{lemma}[\cite{parkerSoluble} Lemma 3.5]\label{p5} 
		Let $G$ be any solvable group with a trivial center.  If the diameter of $\Gamma(G)$ is at least 7, then $\lvert J /F \rvert$ is coprime to $\lvert F \rvert$.
	\end{lemma}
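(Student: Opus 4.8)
The plan is to argue by contradiction, with Lemma~\ref{p4} supplying essentially all of the content. Suppose some prime $p$ divides both $\lvert F \rvert$ and $\lvert J/F \rvert$. Since $F = F(G)$ is nilpotent, $p \mid \lvert F \rvert$ forces $O_p(G) \neq 1$, so among the normal subgroups of $G$ contained in $O_p(G)$ we may choose a minimal one, $V$; being a characteristically simple $p$-group, $V$ is elementary abelian of exponent $p$, and I will view it as a nonzero vector space over $\text{GF}(p)$ on which $G$ acts by conjugation. Because the diameter of $\Gamma(G)$ is at least $7$, Lemma~\ref{p4} applies to the minimal normal subgroup $V$ and gives $\centralizer{G}{V} = F$. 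Hence the conjugation action of $G$ on $V$ has kernel exactly $F$, so $\bar G := G/F$ acts faithfully on $V$; in particular $\centralizer{\bar G}{V} = 1$.

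Next I would produce a nontrivial normal $p$-subgroup of $\bar G$. Since $J/F = F(G/F) = F(\bar G)$ is nilpotent and $p \mid \lvert J/F \rvert$, its Sylow $p$-subgroup $T$ is nontrivial; as $T$ is characteristic in $F(\bar G) \trianglelefteq \bar G$, we have $T = O_p(\bar G) \trianglelefteq \bar G$ and $T \neq 1$. Now $T$ is a $p$-group acting on the nonzero $\text{GF}(p)$-space $V$, and a finite $p$-group acting on a nonzero vector space over $\text{GF}(p)$ always fixes a nonzero vector (count $T$-orbits on $V$ modulo $p$). Therefore $\centralizer{V}{T} \neq 1$.

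The remaining point is that $\centralizer{V}{T}$ is normalized by $G$. Since the action of $G$ on $V$ factors through $\bar G$ and $T \trianglelefteq \bar G$, for any $\bar g \in \bar G$, any $v \in \centralizer{V}{T}$, and any $t \in T$ we have $t \cdot (\bar g \cdot v) = \bar g \cdot \bigl((\bar g^{-1} t \bar g) \cdot v\bigr) = \bar g \cdot v$, using $\bar g^{-1} t \bar g \in T$; hence $\bar g \cdot v \in \centralizer{V}{T}$. Thus $\centralizer{V}{T}$ is a nontrivial normal subgroup of $G$ contained in $V$, so minimality of $V$ forces $\centralizer{V}{T} = V$. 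Then $T \leq \centralizer{\bar G}{V} = 1$, contradicting $T \neq 1$. Hence no prime divides both $\lvert F \rvert$ and $\lvert J/F \rvert$, which is exactly the assertion that $\lvert J/F \rvert$ is coprime to $\lvert F \rvert$.

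I do not expect a genuine obstacle once Lemma~\ref{p4} is in hand; the remaining steps---the identification $O_p(\bar G) = O_p(F(\bar G))$ and the verification that $\centralizer{V}{T}$ is $G$-invariant---are routine. The substantive work is absorbed into Lemma~\ref{p4}; without that result one would instead have to argue directly inside $\Gamma(G)$ that a configuration with $\gcd(\lvert F \rvert, \lvert J/F \rvert) \neq 1$ keeps the relevant vertices within distance $6$, a more delicate route of the sort pursued in \cite{parkerSoluble}.
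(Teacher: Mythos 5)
Your proof is correct. Note that the paper does not actually prove this statement---it is quoted verbatim from Parker \cite{parkerSoluble} (his Lemma 3.5)---so there is no in-paper argument to compare against; your derivation (choose the minimal normal subgroup $V$ inside $O_p(G)$ so that Lemma~\ref{p4} gives a faithful action of $G/F$ on the $\text{GF}(p)$-space $V$, then observe that the nontrivial normal $p$-subgroup $O_p(G/F)$ would have to centralize all of $V$ by minimality) is a clean and complete way to obtain the result from Lemma~\ref{p4}, in the same spirit as Parker's original proof.
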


	The following lemma is adapted from Lemma 3.9 of \cite{parkerSoluble}, with the statement and proof modified to better work with $A$-groups.
	
	\begin{lemma}\label{p9}
		Suppose $G$ satisfies Hypotheses \ref{hyp2} and that $\centerof{G}=1$.  Assume that the diameter of $\Gamma(G) \geq 7$ and that $V$ is a minimal normal subgroup of $G$.  Let $g \in G$ and $n \in \mathbb{Z}$ such that $o(g^n) = p$ for some prime $p$.  If $\centralizer{V}{g^n} =1$, then $g^n \in J$.
	\end{lemma}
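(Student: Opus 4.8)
The plan is to pass to $\overline{G} = G/F$, where $F = \centralizer{G}{V}$ by Lemma~\ref{p4} (recall that $V$, being a minimal normal subgroup of the solvable group $G$, is an elementary abelian $q$-group for some prime $q$). Then $\overline{G}$ acts faithfully on $V$. Write $x = g^n$, so $o(x) = p$ and $\centralizer{V}{x} = 1$, and suppose toward a contradiction that $x \notin J$. Since $F \leq J$, we get $x \notin F$, so $\overline{x} := xF$ is nontrivial, $o(\overline{x}) = p$, and $\centralizer{V}{\overline{x}} = \centralizer{V}{x} = 1$. I would first record that $p \neq q$: otherwise $V\langle x\rangle$ is a $p$-group with $V$ a nontrivial normal subgroup, so $V \cap \centerof{V\langle x\rangle} \neq 1$ and $\centralizer{V}{x} \neq 1$. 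The goal is now to show $\overline{x} \in \fit{G/F} = J/F$.

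Next I would pin down an $\overline{x}$-invariant subgroup to feed into Lemma~\ref{Asch}. Since $G$ is nonabelian with trivial center it is not nilpotent, so $G \neq F$, $\overline{G} \neq 1$, and $\overline{J} := J/F = \fit{\overline{G}} \neq 1$. As $\overline{G}$ is solvable, $\centralizer{\overline{G}}{\fit{\overline{G}}} \leq \fit{\overline{G}} = \overline{J}$, so $\overline{x} \notin \overline{J}$ forces $\overline{x}$ not to centralize $\overline{J}$. Decomposing the nilpotent group $\overline{J}$ into the direct product of its Sylow subgroups, each characteristic in $\overline{J}$ and hence normal in $\overline{G}$, I pick a prime $r$ with $\overline{x}$ not centralizing the Sylow $r$-subgroup $\overline{Q}$ of $\overline{J}$; note $\overline{Q}$ is abelian, being a subgroup of the $A$-group $\overline{G}$ (a quotient of $G$). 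The primes $p,q,r$ are then pairwise distinct: $r \neq q$ since $r \mid \lvert J/F \rvert$, which is coprime to $\lvert F \rvert$ by Lemma~\ref{p5} while $q \mid \lvert F \rvert$; and $r \neq p$ since otherwise $\langle\overline{x}\rangle\overline{Q}$ is a $p$-subgroup of the $A$-group $\overline{G}$, hence abelian, so $\overline{x}$ centralizes $\overline{Q}$, a contradiction.

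Then I would apply Lemma~\ref{Asch} to the conjugation action of $\langle\overline{x}\rangle\overline{Q}$ on $V$, with $\langle\overline{x}\rangle$ (of prime order $p$) playing the role of the acting group of prime order, $\overline{Q}$ the coprime-order group it acts on, and $V$ the module (over $\mathrm{GF}(q)$). The action of $\langle\overline{x}\rangle$ on the $r$-group $\overline{Q}$ is nontrivial, hence faithful; and $V$ is a faithful $\mathrm{GF}(q)[\langle\overline{x}\rangle\overline{Q}]$-module: the kernel $K$ of this action is normal in $\langle\overline{x}\rangle\overline{Q}$ and meets $\overline{Q}$ trivially (as $\overline{Q} \leq \overline{J}$ already acts faithfully on $V$), whence $[K,\overline{Q}] = 1$ and $\lvert K \rvert \mid p$; and $K = \langle\overline{x}\rangle$ is impossible because $\overline{x} \neq 1$ acts nontrivially on $V$, so $K = 1$. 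The Fermat-prime exception in Lemma~\ref{Asch} is automatically satisfied here since $\overline{Q}$ is abelian --- this is the one place the $A$-group hypothesis is genuinely needed, together with ruling out $r = p$. Lemma~\ref{Asch} now yields $\centralizer{V}{\overline{x}} \neq 1$, contradicting $\centralizer{V}{x} = 1$. Hence $x = g^n \in J$.

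The main obstacle I anticipate is setting up the hypotheses of Lemma~\ref{Asch} cleanly: one must extract a prime $r$ and an abelian, $\overline{x}$-invariant $r$-subgroup of $\fit{\overline{G}}$ on which $\overline{x}$ acts nontrivially and which acts faithfully on $V$, verify the three primes involved are distinct, and confirm the module-faithfulness of $\langle\overline{x}\rangle\overline{Q}$ on $V$. A secondary nuisance is the relabeling of primes, since Lemma~\ref{Asch} calls the module prime $p$ whereas in our situation the module is the $q$-group $V$; keeping that dictionary straight is where an error would most easily creep in.
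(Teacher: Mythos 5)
Your proposal is correct and follows essentially the same route as the paper: locate a prime $t$ (your $r$) with $g^n$ not centralizing the Sylow $t$-subgroup of $J/F = \fit{G/F}$, use the $A$-group property to rule out $t = p$, and apply Lemma~\ref{Asch} to the action on $V$ (faithful by Lemma~\ref{p4}) to contradict $\centralizer{V}{g^n} = 1$. The only differences are cosmetic — you work in the quotient $G/F$ where the paper lifts to an invariant Sylow subgroup of $J$ — and you are somewhat more careful than the paper in checking that all three primes are distinct and that the module is faithful.
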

	
	\begin{proof}
		Let $g\in G$ such that $o(g^n) = p$ for some prime $p$ and that $\centralizer{V}{g^n}=1$.  Suppose $g^n \notin J$.  As $G/F$ is solvable and $J/F = F(G/F)$, we have that there exists a prime $t$ such that the Sylow $t$-subgroup of $J/F$ is not centralized by $g^n$.  Since $G$ is an $A$-group (and therefore $G/F$ is also an $A$-group), we have that $p \neq t$.  Choose $T \in \syl{t}{J}$ to be $\langle g^n \rangle$ invariant.  Note that $T\langle g^n \rangle$ acts faithfully on  $V$ by Lemma \ref{p4}.  However, as $\centralizer{V}{g^n} = 1$, Lemma \ref{Asch} gives us a contradiction.  Therefore, $g^n \in J$.
	\end{proof}

	We are now ready to prove that an $A$-group with a trivial center that is neither Frobenius nor 2-Frobenius will have a commuting graph with diameter at most 7. 
	
	\begin{theorem}\label{dn}
		Let $G$ satisfy Hypotheses \ref{hyp2} and suppose that $\centerof{G}=1$.  Then, $\Gamma(G)$ is connected of diameter at most 6.
	\end{theorem}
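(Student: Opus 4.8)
The plan is to argue by contradiction. Since $\centerof{G}=\id$, Hypothesis~\ref{hyp2} says that $G=G/\centerof{G}$ is neither Frobenius nor $2$-Frobenius, so $\Gamma(G)$ is connected by \cite{REU}; suppose for contradiction that its diameter is at least $7$. By Hypothesis~\ref{hyp2} the derived length of $G$ is at least $2$, and the case of derived length $2$ is Lemma~\ref{d2}, so assume it is at least $3$. Then $G'$ is not nilpotent, so $G'\not\le F$ where $F=\fit{G}$, and, writing $J$ for the preimage of $\fit{G/F}$, we have $F<G$ and $J<G$. Since $G$ is an $A$-group, $F$ is nilpotent with abelian Sylow subgroups, hence abelian. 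Fix a minimal normal subgroup $V$ of $G$ (an elementary abelian $p$-group) and an element $v\in V^{\#}$. By Lemma~\ref{p4}, $F=\centralizer{G}{V}$; by Lemma~\ref{p5}, $\gcd(\lvert J/F\rvert,\lvert F\rvert)=1$; and Lemma~\ref{p9} is available. The governing observation is that $F^{\#}$ is a clique of $\Gamma(G)$ containing $v$, so it suffices to prove $d(g,v)\le 3$ for every $g\in G^{\#}$, which contradicts the assumed diameter.

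Fix $g\in G^{\#}$. If some power $g^{n}\ne 1$ has $\centralizer{F}{g^{n}}\ne\id$, choose $a\in\centralizer{F}{g^{n}}^{\#}$ and use the path $g\sim g^{n}\sim a\sim v$ (the last edge since $a,v\in F$), so $d(g,v)\le 3$; the same argument works if $\centralizer{V}{g^{n}}\ne\id$ for some $n$. Thus we may assume $\centralizer{F}{g^{n}}=\id$ for all $n$ with $g^{n}\ne 1$, i.e.\ $F\rtimes\langle g\rangle$ is a Frobenius group with kernel $F$; using that $G$ is an $A$-group, this forces $o(g)$ to be coprime to $\lvert F\rvert$ (a common prime divisor $s$ would make the $s$-part of $g$ centralize a nontrivial Sylow $s$-subgroup of $F$), hence coprime to $\lvert V\rvert$, and it gives $\centralizer{V}{g^{n}}=\id$ for all $n$ with $g^{n}\ne 1$. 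Moreover, for each prime $s\mid o(g)$ the power $y_{s}$ of $g$ of order $s$ has $\centralizer{V}{y_{s}}=\id$, so $y_{s}\in J$ by Lemma~\ref{p9}; since the $y_{s}$ commute and have coprime orders, their product $x=\prod_{s}y_{s}$ is a power of $g$ lying in $J$, with $o(x)$ coprime to $\lvert F\rvert$, so $x$ is a $\pi(J/F)$-element of $J$ and hence lies in a complement $H$ of $F$ in $J$ (which exists by Schur--Zassenhaus and is abelian, as $H\cong J/F=\fit{G/F}$).

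The heart of the argument is to rule out this remaining situation. One first wants to place $g$ itself, not just the power $x$, inside a complement $H$ of $F$ in $J$: if some Sylow part $g_{s}$ of $g$ lay outside $J$, one studies the faithful action of $g_{s}J/J\in G/J$ on $\fit{G/F}=J/F$ and, via a $\langle g_{s}\rangle$-invariant Sylow $t$-subgroup $T$ of $J$ with $t\ne s$, aims to apply Lemma~\ref{Asch} (with $Q=T$, abelian as $G$ is an $A$-group, and module $V$, using Lemma~\ref{p4} and the coprimality of Lemma~\ref{p5}) to produce a power of $g$ with nontrivial fixed space on $V$, a contradiction; the subtle point is to locate an order-$s$ section that both acts faithfully on $T$ and fixes no nonzero vector of $V$, which is where the interplay of Lemma~\ref{p9} with the way $\langle g_{s}\rangle$ meets $J$ must be controlled carefully. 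Once $g\in J$, we may take $g\in H$ with $\centralizer{J}{g}=H$ (from $\centralizer{J}{g}\cap F=\centralizer{F}{g}=\id$ together with $H\le\centralizer{J}{g}$), so $\langle g\rangle\le H\le\centralizer{G}{g}$; and if some $h\in H^{\#}$ had $\centralizer{F}{h}\ne\id$ we would finish via $g\sim h\sim a\sim v$. Hence every nontrivial element of $H$ acts fixed-point-freely on $F$, so $J=F\rtimes H$ is a Frobenius group with kernel $F$ by Lemma~\ref{Frob1}, and a $J$-conjugacy/orbit-counting argument makes $\normalizer{G}{H}$ a complement of $F$ in $G$, giving $G=F\rtimes E$ with $E=\normalizer{G}{H}\cong G/F$, $H=\fit{E}$, and $H$ fixed-point-free on $F$.

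I expect this last configuration to be the main obstacle. To complete the proof one must contradict Hypothesis~\ref{hyp2} by showing that $E$ acts fixed-point-freely on $F$ (so $G=F\rtimes E$ is a Frobenius group) or that $E$ is a Frobenius group with kernel $H=\fit{E}$ (so $G$ is a $2$-Frobenius group with kernels $F\le J$); equivalently, one must eliminate the residual freedom in the action of $E/H\cong G/J$ on $\fit{E}\cong\fit{G/F}$. I anticipate that this is carried out by exploiting once more the faithful action of $G/F$ on $V$ (Lemma~\ref{p4}), the coprimality in Lemma~\ref{p5}, Lemma~\ref{p9}, and Lemma~\ref{Asch}, together with the description of $F$ in Lemma~\ref{FitGroup} as $\centerof{G'}\times\cdots\times\centerof{G^{(n-1)}}$. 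Once such a contradiction is reached, $d(g,v)\le 3$ for every $g\in G^{\#}$, whence $\operatorname{diam}(\Gamma(G))\le 6$.
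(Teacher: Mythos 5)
Your opening reductions match the paper's: assume diameter at least $7$, fix a hub in $F^{\#}$, dispose of any $g$ having a power with nontrivial fixed points on $F$, and use Lemma \ref{p9} to push prime-order powers of the remaining $g$ into $J$. After that the argument has two genuine gaps. First, you try to place $g$ itself (not just a prime-order power) inside $J$, and hence inside a complement $H$ of $F$ in $J$; you flag this as "subtle" but never carry it out, and in fact it is false in general. Lemma \ref{p9} controls only elements of prime order, and an element $g$ with $\centralizer{F}{g^{n}}=1$ for all $n$ can perfectly well lie outside $J$ (in the paper's analysis such a $g$ lies in a conjugate of $M_0M_1$, where $M_0$ is an absolute system normalizer of $G$ that may meet $J$ trivially). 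Everything downstream of "once $g\in J$" — the identification $\centralizer{J}{g}=H$, the path $g\sim h\sim a\sim v$ — therefore rests on an unestablished and generally untrue premise. The paper never needs $g\in J$: when $J$ is not Frobenius it routes only a prime-order power $g^{n}$ through a conjugate of the complement $K$ of $F$ in $J$, after first proving that $C^{*}=\centralizer{F}{k^{*}}$ is normalized by $J$ so that every conjugate of $K$ meets $\centralizer{G}{C^{*}}$; that normalizer argument is also what makes a single hub $c^{*}$ work, and your fixed $v\in V^{\#}$ has no analogous property.

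Second, the decisive case — $J$ Frobenius with kernel $F$ — is not proved at all; your last paragraph says "I expect" and "I anticipate," and the configuration you hope to contradict ($E$ acting fixed-point-freely, or $E$ Frobenius over $\fit{E}$) need not arise, so no contradiction with Hypothesis \ref{hyp2} is forthcoming along that line. The paper's resolution is quite different: since the Frobenius complement $K\cong J/F$ is abelian it is cyclic, so $G/J\hookrightarrow \mathrm{Aut}(J/F)$ is abelian and the derived length of $G$ is exactly $3$ (length $2$ being Lemma \ref{d2}). One then invokes Taunt--Hall theory (Lemma \ref{ML}) to write $G=M_{0}M_{1}G''$ with $F=G''$ (because $\centerof{G'}=1$ when $G'$ is Frobenius, via Lemma \ref{FitGroup}), chooses $x^{*}\in M_{0}^{\#}$ with $\centralizer{F}{x^{*}}\neq 1$, and splits on whether $J\cap M_{0}$ is trivial; the hardest subcase is finished by applying Lemma \ref{need} to the derived-length-$2$ group $H=M_{0}M_{1}$ to produce $\centralizer{M_{0}}{g}\neq 1$. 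None of this machinery — the cyclicity of the complement, the system-normalizer factorization, or Lemma \ref{need} — appears in your proposal, and without something playing its role the proof does not close.
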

	
	\begin{proof}
		Suppose that $G$ satisfies Hypotheses \ref{hyp2} and that $\centerof{G}=1$.  Since $G$ is neither Frobenius nor 2-Frobenius, we know that $\Gamma(G)$ is connected by the main result of \cite{parkerSoluble}.  Therefore, we need only prove that the upper bound of the diameter of the commuting graph is 6.  We proceed by contradiction and assume that $\Gamma(G)$ has diameter at least 7. If we can show that every element in $G$ has a distance of at most 3 from some specific element $c^* \in F^{\#}$, then we will get that the diameter of $\Gamma(G)$ is at most 6, giving us our desired contradiction.  
		
		First, assume that $J$ is not a Frobenius group.  From Lemma \ref{p5}, we have that $\lvert J/F \rvert$ and $\lvert F \rvert$ are coprime.  Thus, $F$ is a normal Hall subgroup of $J$ and so has a complement in $J$.  Let $K$ be a complement to $F$ in $J$.  Note that as $K \cong J/F$, we have that $K$ is abelian.  Since $J$ is not Frobenius, we have that there exists $k^* \in K^{\#}$ such that $C^* = \centralizer{F}{k^*} \neq 1$.  Consider $\normalizer{G}{C^*}$.  We have that as $F$ is abelian and $C^* \leq F$ that $F$ normalizes $C^*$.  Since $F \trianglelefteq G$, we have that $xcx^{-1} \in F$ for all $c \in F, x \in K$.  Therefore, since $(xcx^{-1})k^* = xck^*x^{-1} = xk^*cx^{-1} = k^*(xcx^{-1})$ for all $x \in K, c \in C^*$, we have that $xcx^{-1} \in C^*$ and so $K$ normalizes $C^*$.  Thus, $ \normalizer{G}{C^*} \geq  K  F  =  J$ and so $J$ normalizes $C^*$.  Hence, every conjugate of $K$ lies in $\normalizer{G}{C^*}$.  In particular, as $K$ intersects $\centralizer{G}{C^*}$ nontrivially, every conjugate of $K$ intersects $\centralizer{G}{C^*}$ nontrivially.
		
		Fix $c^* \in \left(C^*\right)^{\#}$.  Let $g \in G$.  If we can find a positive integer $n <o(g)$ so that $\centralizer{F}{g^n}\neq 1$, then we have that
		\[g \sim g^n \sim f \sim c^*,\]
		for some $f \in \centralizer{F}{g^n}^{\#}$.  Thus, we must assume that $\centralizer{F}{g^n} = 1$ for every positive integer $n < o(g)$.  In particular, when $g^n$ has prime order, we have that $g^n \in J$ by Lemma \ref{p9}.  As $\centralizer{F}{g^n}=1$, we have that $g^n$ has order coprime to the order of $F$.  Therefore, $g^n$ lies is some Hall $\pi(J/F)$-subgroup of $J$, which is contained in some conjugate of $K$.  Without loss of generality, we may assume $g^n$ lies in $K$.  Therefore, $g^n \sim k^*$ and we have that 
		\[g \sim g^n \sim k^* \sim c^*.\] Hence, in either case, we get that $d(g,c^*) \leq 3$.  Therefore, if $J$ is not a Frobenius group, we get that the diameter of the commuting graph is at most 6, which contradicts our initial assumption that the diameter is at least 7.
		
		Therefore, we must have that $J$ is a Frobenius group with complement $K$.  Since $J/F \cong K$, we get that $J/F$ is cyclic.  Thus, $\text{Aut}(J/F)$ is abelian, and we have that 
		\[ \normalizer{G/F}{J/F}/\centralizer{G/F}{J/F} = (G/F)/(J/F) \cong G/J \]
		is abelian.  Since $G/J$ is abelian, we have that the derived length of $G$ is at most 3.  Since $G$ is not abelian, the derived length is at least 2.  However, if the derived length is 2, then we have already shown in Lemma \ref{d2} that the diameter of $\Gamma(G)$ will be at most 4, which is a contradiction.  So that gives us that the derived length of $G$ is 3, and we can write $G$ as $G=HG''$, where $H=M_G(G')$ is a relative system normalizer for $G'$ in $G$.  Note that $H \cap G' = M_1$ is an absolute system normalizer of $G'$.  Furthermore, for the appropriate choice of Sylow system of $G'$, we have that an absolute system normalizer $M_0$ of $G$ will be contained in $H$.  As $\lvert H \rvert = \lvert M_0 \rvert \cdot \lvert M_1 \rvert$, we can write $H$ as $H=M_0M_1$.  Therefore, we get $G = M_0M_1G''$.  Note that $F = \centerof{G'} \times G''$ by Lemma \ref{FitGroup}.  As $J$ is a Frobenius group and $G' \leq J$, we have that $G'$ is Frobenius and therefore $\centerof{G'} =1$.  Hence, $F = G''$.
		
		Note that as $G$ is not a Frobenius group and every element of $M_1^{\#}$ acts fixed-point freely on $F$, we have that there exists $x^* \in M_0^{\#}$ such that $\centralizer{F}{x^*}\neq 1$.  Consider $\langle x^* \rangle F$.  Note that there are $\lvert G: \normalizer{G}{\langle x^* \rangle F}\rvert = \lvert M_1: \normalizer{M_1}{\langle x^*\rangle F} \rvert = \lvert M_1 : \normalizer{M_1}{\langle x^* \rangle} \rvert$ conjugates of $\langle x^* \rangle F$ in $G$ and that there are $\lvert H: \normalizer{H}{\langle x^* \rangle} \rvert = \lvert M_1 : \normalizer{M_1}{\langle x^* \rangle} \rvert$ conjugates of $\langle x^* \rangle$ in $H$.  Therefore, each conjugate of $\langle x^* \rangle$ in $H$ intersects a distinct conjugate of $\langle x^* \rangle F$ in $G$.  Furthermore, every conjugate of $H$ will intersect each conjugate of $\langle x^* \rangle F$ nontrivially.  We have that $\centerof{\langle x^* \rangle F} \neq 1$.  So choose $x^* \in M_0^{\#}$ to be as above and fix $c^* \in \centerof{\langle x^* \rangle F}^{\#}$.
		
		Let $g\in G$.  As in the previous case, if we can find an integer $n < o(g)$ so that $\centralizer{F}{g^n} \neq 1$, then $g \sim g^n \sim f \sim c^*$, where $f \in \centralizer{F}{g^n}^{\#},$ and $d(g,c^*) \leq 3$.  Hence, we must have that $\centralizer{F}{g^n} = 1$ for all $n<o(g)$.  In this case, the order of $g$ must be coprime to the order of the Fitting subgroup.  Hence, $g$ lies in some conjugate of H; without loss of generality, we can say that $g \in H$.
		
		First, suppose that $J_0 = J \cap M_0 \neq 1$.  Therefore, $G'\lneq J$.  Note that as $J/F = F(G/F)$, we have from Lemma \ref{FitGroup} that 
		\[J/F= \centerof{G/F} \times \centerof{(G/F)'} = \centerof{G/F} \times G'/F \neq G'/F.\]
		Therefore, $\centerof{G/F}\neq 1$.  Note that as $H \cong G/F$, we therefore have that $\centerof{H}\neq 1$.  Let $z \in \centerof{H}^{\#}$.  Therefore, we have that 
		\[g \sim z \sim x^* \sim c^*.\]
		Hence, $d(g,c^*) \leq 3$.  In both cases, we get that $d(g,c^*) \leq 3$ and so the diameter of $\Gamma(G)$ is at most 6, a contradiction.
		
		That leaves the case when $J_0 = 1$ and so $J = G'$.  However, from Lemma \ref{FitGroup}, we have that
		\[J/F = \centerof{G/F} \times G'/F = G'/F,\]
		which means that $\centerof{G/F}=1$.  Hence, we have that $\centerof{H}=1$. Suppose that $x \in M_0$ has prime order.  By Lemma \ref{p9}, we have that if $\centralizer{F}{x} = 1$, then $x \in J$.  However, $M_0 \cap J = 1$, which gives us a contradiction.  Thus, every element of prime order in $M_0$ centralizes some element in $F^{\#}$.
		
		Therefore, we can choose any such element to act as $x^*$ as in the previous case.  To ensure shortest path between $M_0$ and $M_1$, we will therefore choose $x^* \in M_0$ so that $\centralizer{M_1}{x^*} \neq 1$.  As before, note that each conjugate of $\langle x^* \rangle$ in $H$ corresponds to a unique conjugate of $\langle x^* \rangle F$ in $G$.  Fix $c^* \in \centerof{\langle x^*\rangle F}^{\#}$.
		
		By Lemma \ref{p9}, as $\centralizer{F}{g^n} = 1$ for all $n < o(g)$, we have that $g^n \in J = G'$ whenever $g^n$ has prime order.  More specifically, we get that $g^n$ lies in some conjugate of $M_1$.  Without loss of generality, $g^n \in M_1$.  Since this holds for all integers $n$ such that $o(g^n)$ is prime, we get that every prime dividing $o(g^n)$ also divides $\lvert M_1 \rvert$.  Hence, $g$ is a $\pi(M_1)$-element and so $g \in H$.  If $g \in M_1$, we have that \[g \sim y \sim x^* \sim c^*,\] where $y \in \centralizer{M_1}{x^*}^{\#}$.  Thus, $d(g,c^*) \leq 3$.  Thus, we must have that $g$ is a $\pi(M_1)$-element that does not lie in $M_1$.  Thus, there exists a prime $p$ dividing the order of $g$ such that the $p$-part of $g$ does not lie in $M_1$.  In particular, this gives us that $M_1$ is not a Hall subgroup of $H$.  Note that the absolute system normalizers of $H$ are the conjugates of $M_0$ contained in $H$.  Thus, we have that $H$ is a solvable $A$-group of derived length 2 and that $M_1 = H'$ is not a Hall subgroup of $H$; furthermore, we have that $g$ is an element of $H$ such that the $p$-part of $g$ does not lie in $M_1 = H'$ or any absolute system normalizer of $H$.  Hence, by Lemma \ref{need}, we have that $\centralizer{M_0}{g} \neq 1$.  Thus, we can find an element $x$ of prime order in $\centralizer{M_0}{g}^{\#}$ so that \[g \sim x \sim f \sim c^*,\] where $f \in \centralizer{F}{x}^{\#}$.  Therefore, $d(g,c^*) \leq 3$.  This means that in all cases, $d(g,c^*) \leq 3$.  Hence, we have that the diameter of the commuting graph is at most 6, contradicting our original hypothesis that the diameter was at least 7.  Therefore, we have that the diameter of the commuting graph will be at most 6.
	\end{proof}
	
	Having proven that the diameter of the commuting graph of a solvable group with a trivial center is at most 6, we are now ready to prove Theorem \ref{main}.
	
	\begin{proof}[Proof of Theorem \ref{main}]
		Let $G$ be a solvable $A$-group such that $G/Z$ is neither a Frobenius nor 2-Frobenius group.  Since $G/Z$ is neither a Frobenius nor 2-Frobenius group, we have that $\Gamma(G/Z)$ is connected.  Theorem \ref{dn} gives us that the diameter of $\Gamma(G/Z)$ is at most 6.  From \cite{REU}, we have that the diameter of $\Gamma(G)$ is the same as the diameter of $\Gamma(G/Z)$.  Thus, $\Gamma(G)$ is connected with diameter at most 6.
	\end{proof}
	
	Lastly, we provide an example of a solvable $A$-group with a connected commuting graph that has a diameter of 6, showing that the bound in Theorem \ref{main} cannot be improved.  The example comes from the Small Groups Library, SmallGroup(1500,115).  This group was investigated by Giudici and Pope in \cite{GiuPope} as part of their investigation of the commuting graph of solvable groups in the Small Groups Library.  Let $G$=SmallGroup(1500,115).  Note that the center of this group is trivial while the derived subgroup, which has order 375, has a nontrivial center.  The 2-elements in this group do not commute with any of the elements in the Fitting subgroup, which is the Sylow 5-subgroup of $G$.  Furthermore, the generators of order 4 do not commute with any 3-elements.  Hence, the distance between any element of order 4 and any nontrivial element in the center of the derived subgroup will be at most 3.  Thus, the maximum distance between any two elements of $G$ will be exactly 6.

\end{document}